\numberwithin{equation}{section}
\newtheorem{Prop}{Proposition}[section]
\newtheorem{Lem}[Prop]{Lemma}
\newtheorem{Th}[Prop]{Theorem}
\newtheorem{Rm}[Prop]{Remark}
\newtheorem{Def}[Prop]{Definition}
\newtheorem{As}[Prop]{Assumption}
\newtheorem{Test}[Prop]{Test}
\newtheorem{Cor}[Prop]{Corollary}
\newtheorem{Ex}[Prop]{Example}
\newtheorem*{Def*}{Definition}
\numberwithin{equation}{section}
\newcommand{\iid}{\operatorname{\stackrel{i.i.d.}{\sim}}}
\newcommand{\inD}{\operatorname{\stackrel{\mathcal{D}}{\to}}}
\newcommand{\as}{\operatorname{~~a.s.}}
\newcommand{\cA}{\mathcal{A}}
\newcommand{\cC}{\mathcal{C}}
\newcommand{\cD}{\mathcal{D}}
\newcommand{\cN}{\mathcal{N}}
\newcommand{\cS}{\mathcal{S}}
\newcommand{\cR}{\mathcal{R}}
\newcommand{\Prb}{\mathbb{P}}
\newcommand{\CC}{\mathbb{C}}
\newcommand{\NN}{\mathbb{N}}
\newcommand{\EE}{\mathbb{E}}
\newcommand{\SSS}{\mathbb{S}}
\newcommand{\RR}{\mathbb{R}}
\DeclareMathOperator*{\argmin}{argmin}
\DeclareMathOperator*{\Cut}{Cut}
\DeclareMathOperator{\vol}{\mathbf{vol}}
\DeclareMathOperator{\RE}{\mathbf{Re}}
\DeclareMathOperator{\IM}{\mathbf{Im}}
\DeclareMathOperator{\tr}{\mathbf{tr}}
\DeclareMathOperator{\cov}{\mathbf{cov}}
\DeclareMathOperator{\grad}{\mathbf{grad}}
\DeclareMathOperator{\st}{\mathrm{such~that~}}
\newcommand{\sdp}{{\rtimes}}
\title{Two-Sample Tests for Optimal Lifts, Manifold Stability and Reverse Labeling Reflection Shape}
\author[1]{Do Tran Van\footnote{do.tranvan@uni-goettingen.de}}
\author[2]{Susovan Pal\footnote{Susovan.Pal@vub.be}}
\author[1]{Benjamin Eltzner\footnote{beltzne@uni-goettingen.de}}
\author[1]{Stephan F. Huckemann\footnote{huckeman@math.uni-goettingen.de}}
\affil[1]{\normalsize Institute for Mathematical Stochastics, University of G\"ottingen, Germany}
\affil[2]{\normalsize Mathematics and Data Science Group, Vrije Universiteit Brussel, Belgium}
\begin{document}

\maketitle
\date

\tableofcontents

\begin{abstract}
  We consider a quotient of a complete Riemannian manifold modulo an isometrically and properly acting Lie group and lifts of the quotient to the manifolds in optimal position to a reference point on the manifold. With respect to the pushed forward Riemannian volume onto the quotient we derive continuity and uniqueness a.e. and smoothness to large extents also with respect to the reference point. In consequence we derive a general manifold stability theorem: the Fr\'echet mean lies in the highest dimensional stratum assumed with positive probability, and a strong law for optimal lifts. This allows to define new two-sample tests utilizing individual optimal lifts which outperform existing two-sample tests on simulated data. They also outperform existing tests on a newly derived reverse labeling reflection shape space, that is used to model filament data of microtubules within cells in a biological application.
\end{abstract}

\section{Introduction}
    Statistical analysis of shape leads to the general task of performing statistics on data in a quotient, often a nonmanifold quotient $Q=M/G$ of a complete Riemannian manifold $M$ (of pre-shapes) modulo an isometric and proper action of a Lie group $G$ (conveying shape equivalence). The very successful and  highly popular \emph{Procrustes analysis} introduced by \cite{Gow} from \cite{HC62} registers pre-shape data to the tangent space of a pre-shape of a Procrustes mean and can then apply a plethora of statistical methods from the toolbox of Euclidean statistics \citep{DM16}. In the language of geometry this registration procedure amounts to \emph{lifting} the quotient $Q$ to the top manifold $M$ in \emph{optimal position} to some $p \in M$. For less concentrated data, due to the presence of curvature (shape spaces feature infinite positive curvatures near singularities, e.g. \cite{KBCL99,HHM07}) decisive shape information can get lost, however.
    
    In this contribution we aim at filtering out main effects of such curvatures caused by passing to the quotient. To this end, we first show that in general such lifts are measurable (Theorem \ref{th:exist-opt-measurable-lift}) and almost everywhere unique and continuous (Theorem \ref{th:cont-and-uniq-opt-lift}), even to large extents smooth, also with respect to their basepoints (Corollary \ref{cor:lift-smooth}). In passing, this allows to extend the \emph{manifold stability theorem} from \cite{H_meansmeans_12}, now stating that in general a Fr\'echet mean is assumed on the manifold part of the quotient, whenever it is assumed with positive probability (Theorem \ref{th:manifold-stability}). Hence, in combination with our smoothness Corollary \ref{cor:lift-smooth}, in general, for optimal lifts, central limit theorems on manifolds (ranging from \cite{BP05} to \cite{EltznereHuckemann2019} giving a rather general version including smeariness) can be applied.
    
    From a.e. continuity of optimal lifts in the basepoint we derive a strong law of large numbers for optimal lifts (Theorem \ref{th:SL-optimal-lift}). This is required for our new two-sample tests introduced in Section \ref{scn:tests} based on lifting each group separately, optimal with respect to optimally positioned lifts of their individual means, instead of lifting optimally with respect to a lift of the pooled mean (as is, e.g., done in Procrustes analysis). First simulations in Section \ref{scn:simulation-application} show that our new tests outperform classical tests.
    
    In view of an application in cell structure biology we introduce in Section \ref{scn:shape}  a new landmark based shape space modulo similarity and reflection and modulo \emph{reverse labeling} of landmarks, which is appropriate for modeling filament string-like shapes by placing landmarks  on them. Thus in Section \ref{scn:simulation-application}, our new tests can distinguish between two groups of cellular buckle structures which previous tests could not. 
    
    We note that the bias introduced by curvature due to passing to the quotient space has been corrected by \cite{MiolaneHolmesPennec2017}.

     Since our results build on a fair amount of geometry convolved with statistics, in the following section we recall a considerable body of foundations.

\section{Setup}

    Throughout this contribution we consider the following setup (detailed in standard textbooks, for instance \cite{Bre72,Lee1997,gray2003tubes,BB12,alexandrino2015lie,PatrangenaruEllingson2015,durrett2019probability}:
    \begin{enumerate}
     \item $M$ is a complete connected finite dimensional Riemannian manifold with countable atlas and intrinsic distance $d$,  tangent space $T_pM$  at $p\in M$, Riemannian inner product $\langle v,w\rangle$ and norm $\|v\|$ for $v,w\in T_pM$ and Riemannian exponential $\exp_p : T_pM \to M$, which has a well defined inverse $\exp_p^{-1}: U \to T_pM$ for some open set $p\in U \subseteq M$, $p$ in $M$. In particular, with the tangent bundle $TM = \cup_{p\in M}(\{p\}\times T_pM)$ 
     $$~ \exp : TM \to M,\quad (p,v) \mapsto \exp_pv$$
     is a smooth map. Here \emph{smooth} means that derivatives of any order exist.
     
     \item Notably, for an incomplete Riemannian manifold, the exponential is well defined and smooth in a neighborhood of the zero-section of the tangent bundle.

     \item For a closed set $A \subset M$ we have the distance $d(p',A) := \min_{p\in A}d(p',p)$ of a point $p'\in M$ from $A$.
     \item
     For a measurable set $A \subseteq M$ we have its (possibly infinite) Riemannian volume $\vol(A)$.
     
     \item For $p\in M$ we have its \emph{cut locus}
     \begin{eqnarray*} \Cut(p)&:=& \{\exp_p(v): v\in T_pM \st \\
      &&\hspace*{2cm}
d(\exp_p(tv),p) = t\|v\| \mbox{ for all }0\leq t \leq 1\mbox{ but }\\
      &&\hspace*{2cm}
d(\exp_p(tv),p) < t\|v\| \mbox{ for  al } t>1\}\,.      
     \end{eqnarray*}
     
 In order to use Theorem \ref{th:Le-Barden} by \cite{LeBarden2014} below, we assume that for every $p\in M$ and every $p' \in \Cut(p)$ there are $v_1,v_2 \in T_pM$ with $v_1 \neq v_2$ but $\exp_p(v_1) = p'= \exp_p(v_2)$. Manifolds where this is not the case are rather exotic.  
 
      For an incomplete, geodesically convex (see below) Riemannian manifold, the cut locus of a point $p$ is defined restricting $\exp_p$ to the largest open set in the tangent space for which $\exp_p$ is defined and smooth \citep[p. 117/118]{chavel1995riemannian}. 
      \item $G$ is a Lie group with unit element $e\in G$ acting on $M$ via $G\times M \to M, (g,p)  \mapsto g.p$, properly, i.e. preimages of compact sets under $G\times M \to M \times M, (g,p)  \mapsto (g.p,p)$ are compact, and isometrically, i.e. $d(p,p') = d(g.p,g.p')$ for all $p,p' \in M$ and $g\in G$.

     \item Then the quotient $Q:=M/G = \{[p]: p \in M\}$ with \emph{orbit} $[p] := \{g.p: g \in M\}$ is Hausdorff when $Q$ is equipped with the canonical quotient topology, i.e. the unique topology making the canonical projection $\pi: M \to Q, p \mapsto [p]$ continuous and open. Further,  for every $p\in M$, $[p]\subset M$ is a closed embedded submanifold and
      $$ d_Q([p],[p'])=\min_{g\in G}d(p,g.p')$$
      is the canonical quotients metric. We say that $p,p'\in M$ are \emph{in optimal position} if $d(p,p') = d_Q([p],[p'])$ \citep{HHM07}.

      For better clarity, from now on for $p \in M$, $[p]$ denotes the corresponding subset of $M$ which, as an element of $Q$ is denoted by $q$ such that $p\in [p] = \pi^{-1}(q)$.

     \item $I_p = \{g\in G: g.p = p\}$ is the \emph{isotropy group} at $p\in M$ and we assume that 
       $$ M^* := \{p \in M: I_p = \{e\}\} \neq \emptyset\,.$$
       Then, the  \emph{principal orbit theorem teaches that} $M^*$ is open and dense in $M$. 
\item
       For $p\in M$ every tangent space decomposes into a \emph{vertical space}  along the orbit $[p]$ and its orthogonal complement, the \emph{horizontal space} at $p\in M$:
       $$ T_pM = T_p[p] \oplus H_pM\,.$$
       Further, again by the principal orbit theorem, $Q^*:= M^*/G$ carries a canonical Riemannian quotient structure with $T_qQ^* \cong H_pM$ for every $p \in \pi^{-1}(q)$, $q\in Q^*$. For this reason, $Q^*$ is called the \emph{manifold part} of $Q$ and $Q^0= Q\setminus Q^*$ is the \emph{singular part}, with preimage  $M^0 = \pi^{-1}(Q^0)$.
       
       \item In particular, with the Riemann exponential $\exp^{Q^*}$ of $Q^*$ and the identification $T_{\pi(p)}Q^* \cong H_pM$ for $p \in M^*$,
       we have that
       $$ \exp_p|_{H_pM} = \exp^{Q^*}_{\pi(p)} \,$$
       whenever the r.h.s. is defined, as $Q^*$ is not complete in general. While only $Q$ is complete, $Q^*$ is, however, geodesically convex, i.e. for any two $q,q' \in Q^*$ there is a shortest curve in $Q$ joining the two and this curve lies fully in $Q^*$. In fact, with the parallel transport $\theta_{q,q'}: T_qQ^* \to T_{q'}Q^*$ along a length minimizing geodesic from $q'$ to $q$ (which is unique unless $q'\in \Cut(q)$) and $v\in T_qQ^*$ such that $q'= \exp_q^{Q^*}v$, we have
       $$ q = \exp_{q'}^{Q^*} (-\theta_{q,q'} (v))\,.$$ 

     \item $X_1,\ldots,X_n \iid X$ are random variables on $M$, i.e. they are Borel measurable mappings to $M$ from a silently underlying probability space $(\Omega, \Prb,\cA)$ where we denote by $\Prb^X(B) = \Prb\{X\in B\}$, for Borel measurable $B\subseteq M$, the \emph{push forward measure} on $M$. Here $\iid$ stand for identically and independently distributed, i.e.
     $\Prb^{(X_1,\ldots,X_n)}(B_1\times\ldots\times B_n) = \prod_{j=1}^n \Prb^{X_j}(B_j)$ for all Borel sets $B_j\in M$, $j=1,\ldots,n$ and identically distributed, i.e. $\Prb^{X_j} = \Prb^X$ for all $j=1,\ldots,n$.
     \item
     These random variables give rise to their \emph{population} and \emph{sample Fr\'echet functions}, respectively 
       $$ F^X(p) = \EE[d(p,X)^2],\quad F_n^X(p) := \frac{1}{n} \sum_{j=1}^n d(p,X_j)^2,\quad p \in M\,.$$ 
       Every minimizer is called a \emph{population} and \emph{sample Fr\'echet mean}, respectively
       $$ \mu \in \argmin_{p\in M} F^X(p),\quad  \mu_n \in \argmin_{p\in M} F^X_n(p)\,.$$ 
       \item 
       The above $X_1,\ldots,X_n \iid X$ project to random variables 
       $\pi \circ X_1,\ldots,\pi \circ X_n \iid \pi\circ X$ on $Q$ with the corresponding Fr\'echet functions 
       $$ F^{\pi\circ X}(a) = \EE[d(q,\pi \circ X)^2],\quad F_n^{\pi \circ X}(q) := \frac{1}{n} \sum_{j=1}^n d(q,\pi \circ X_j)^2,\quad q \in Q\,$$
       and Fr\'echet means
       $$ \nu \in  \argmin_{q\in Q} F^{\pi \circ X}(q),\quad  \nu_n \in \argmin_{q\in Q} F^{\circ X}_n(q)\,.$$ 

       \item Since every random variable $Y$ on $Q$ can be lifted to a random variable $X=\ell_p\circ Y$ as detailed in Theorem \ref{th:exist-opt-measurable-lift}, we can equivalently do our analysis starting with random variables on $M$, or starting with random variables on $Q$.
     \item We assume that $d_Q(q,\pi \circ X)$ is $\Prb$-integrable for some $q=q_0\in Q$, and hence, by the triangle inequality, for any $q\in Q$. Then $\emptyset\neq \argmin_{q\in Q} (F^{\pi \circ X}(q) -F^{\pi \circ X}(q_0))$, again by the triangle inequality, i.e., we assume that a population Fr\'echet mean exists.

    \end{enumerate}

\section{Manifold Stability and Optimal Lifts}

    The following main result of this section states that every Fr\'echet mean lies on the manifold part, if the latter is assumed with positive probability. This strengthens \citet[Corollary 1]{H_meansmeans_12} who assumed that there were only countably many point masses on the singular part. The rest of this section is devoted to its proof and a few more convenient results.
    
    \begin{Th}[Manifold Stability]\label{th:manifold-stability}
     Let $ \nu \in  \argmin_{q\in Q} F^{\pi \circ X}(q)$ and $\Prb\{\pi \circ X \in Q^*\} > 0$ then $$\nu \in Q^*$$
    \end{Th}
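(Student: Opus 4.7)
The plan is to argue by contradiction: assume $\nu\in Q^0$ and fix any lift $p\in\pi^{-1}(\nu)$, so that by properness $I_p$ is a non-trivial compact subgroup. I will exhibit a horizontal direction along which the one-sided directional derivative of $F^{\pi\circ X}$ at $\nu$ is strictly negative, forcing $\nu$ to not be a minimizer. By Theorem \ref{th:exist-opt-measurable-lift}, choose a measurable optimal lift $\tilde X$ of $\pi\circ X$ at $p$, so that $d(p,\tilde X)=d_Q(\nu,\pi\circ X)$ almost surely; by Theorem \ref{th:cont-and-uniq-opt-lift} we may also assume $\tilde X\notin\Cut(p)$ a.s., so that $W_X:=\exp_p^{-1}\tilde X\in H_p$ is well defined. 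Decompose $H_p=H_p^{I_p}\oplus W$ into the $I_p$-fixed subspace and its orthogonal complement; the slice theorem together with the density of $M^*$ forces $W\neq\{0\}$, and whenever $\exp_p(u)\in M^*$ the vector $u\in H_p$ must have non-trivial $W$-component.

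\textbf{Step 1 (local upper bound).} For any $g\in I_p$, $g\cdot\tilde X\notin\Cut(p)$ with $\exp_p^{-1}(g\cdot\tilde X)=g\cdot W_X$. Hence for $p'=\exp_p(tv)$ with $v\in H_p$ and $t\to 0^+$, a first-variation of $d^2$ yields
\[d_Q(\pi(p'),\pi\circ X)^2 \,\leq\, \min_{g\in I_p}d(p',g\cdot\tilde X)^2 \,=\, d(p,\tilde X)^2 - 2t\max_{g\in I_p}\langle v,g\cdot W_X\rangle + O(t^2),\]
with the $O(t^2)$ controlled by Jacobi-field estimates in a manner integrable against $\Prb$. \textbf{Step 2 (directional derivative).} Taking expectations and passing to the limit gives
\[F^{\pi\circ X}\bigl(\pi(\exp_p(tv))\bigr) \,\leq\, F^{\pi\circ X}(\nu) - 2t\,\EE\Big[\max_{g\in I_p}\langle v,g\cdot W_X\rangle\Big] + O(t^2),\]
so it suffices to exhibit $v_0\in W$ with $\EE[\max_{g\in I_p}\langle v_0,g\cdot W_X\rangle]>0$. \textbf{Step 3 (existence of a decrease direction).} For $v\in W$, Haar averaging on $I_p$ gives $\int_{I_p}\langle v,g\cdot W_X\rangle\,dg = \langle v,\mathrm{proj}_{H_p^{I_p}}(W_X)\rangle = 0$, so $\max_{g\in I_p}\langle v,g\cdot W_X\rangle\geq 0$ with strict inequality unless $g\mapsto\langle v,g\cdot W_X\rangle$ is identically zero. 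On the positive-$\Prb$-mass event $\{\pi\circ X\in Q^*\}$, $W_X$ has non-trivial $W$-component, so the support function $v\mapsto\max_{g\in I_p}\langle v,g\cdot W_X\rangle$ of the compact set $I_p\cdot W_X\subset H_p$ is strictly positive on a non-empty open subset of the unit sphere $S(W)\subset W$. Integrating against the normalized uniform measure $\sigma$ on $S(W)$ and applying Tonelli,
\[\int_{S(W)}\EE\Big[\max_{g\in I_p}\langle v,g\cdot W_X\rangle\Big]d\sigma(v) \,\geq\, \EE\Big[\mathbf 1_{\{\pi\circ X\in Q^*\}}\!\int_{S(W)}\!\max_{g\in I_p}\langle v,g\cdot W_X\rangle\,d\sigma(v)\Big] > 0.\]
Hence some $v_0\in S(W)$ realizes strict positivity, and by Step 2, $F^{\pi\circ X}(\pi(\exp_p(tv_0)))<F^{\pi\circ X}(\nu)$ for all small $t>0$, contradicting minimality of $\nu$.

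The main obstacle is the rigorous passage from the pointwise local expansion in Step 1 to the expected first-order bound in Step 2, in particular verifying the local formula $d_Q(\pi(p'),\pi\circ X)=\min_{g\in I_p}d(p',g\cdot\tilde X)$ on a full-$\Prb$-measure subset and justifying the dominated-convergence interchange that absorbs $\Cut(p)$ and lift-ambiguity contributions. This is exactly where the paper's new a.e.\ uniqueness and continuity of optimal lifts (Theorem \ref{th:cont-and-uniq-opt-lift}, Corollary \ref{cor:lift-smooth}) replace the countable-point-masses hypothesis of \cite{H_meansmeans_12}, letting those singular contributions be treated as $\Prb$-null rather than enumerated.
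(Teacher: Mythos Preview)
Your variational approach is genuinely different from the paper's. The paper never computes a directional derivative: instead it shows (Lemma~\ref{lem:basic-lifts}(iv), via Le--Barden's equilibrium Theorem~\ref{th:Le-Barden}) that any two optimal lifts through a lift $\mu$ of the Fr\'echet mean agree $\Prb^{\pi\circ X}$-a.s.; if $\mu\in M^0$ one then picks $e\neq g\in I_\mu$, observes that $g\cdot\ell_\mu$ is another optimal lift, and gets $\ell_\mu=g\cdot\ell_\mu$ a.s., which is impossible on the positive-mass event $\{\pi\circ X\in Q^*\}$ where $g$ acts freely. Your argument instead produces an explicit descent direction in the horizontal slice, which is more hands-on and gives geometric insight into \emph{why} singular points fail to be minimizers; the paper's route is shorter and avoids all analytic estimates.

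There is, however, a real gap in your write-up. You invoke Theorem~\ref{th:cont-and-uniq-opt-lift} to obtain $\tilde X\notin\Cut(p)$ a.s., but that theorem only gives \emph{volume}-a.e.\ uniqueness/continuity of the lift and says nothing about $\Cut(p)$ under $\Prb^X$ (no absolute continuity is assumed in the statement). What you actually need is Lemma~\ref{lem:basic-lifts}(ii) --- that $p$ is a Fr\'echet mean of the lifted measure $\ell_p\circ\pi\circ X$ --- after which Theorem~\ref{th:Le-Barden}(i) gives $\Prb\{\tilde X\in\Cut(p)\}=0$ directly. Relatedly, your final paragraph misdiagnoses the ``main obstacle'': in Step~1 you only need the \emph{inequality} $d_Q(\pi(p'),\pi\circ X)\le d(p',g\cdot\tilde X)$ for a single well-chosen $g\in I_p$, which is trivial, so there is no local formula to verify; and the passage to expectations in Step~2 needs no Jacobi-field bounds, since the triangle inequality gives the integrable dominator $\big|d_Q(\pi(p'),\pi\circ X)^2-d_Q(\nu,\pi\circ X)^2\big|/t\le(2d_Q(\nu,\pi\circ X)+t\|v\|)\|v\|$, after which reverse Fatou suffices. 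With these two fixes your argument goes through.
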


    \begin{proof} This is assertion (v) of Lemma \ref{lem:basic-lifts}.\end{proof}
   
    Now, we build the necessary  machinery.
For $p\in M$ let 
\begin{eqnarray*}
 L_p &:=&\{p' \in M: p' \mbox{ is in optimal position to } p\}\,,\\
 L'_p &:=&\{p' \in L_p: \mbox{ if } p'' \in [p'] \cap L_p \mbox{ then } p''=p'\}\,.
\end{eqnarray*}
Thus, $L'_p$ denotes all points, uniquely in optimal position to $p$.

\begin{Rm} Note that for all $p,p' \in M$, due to the proper  action, $L_p \cap [p'] \neq \emptyset$. Further, optimal positioning is symmetric, i.e.
 $$ p' \in L_p \Leftrightarrow p \in L_{p'}\,.$$
 Putting uniquely in optimal position, however, is not reflexive:
 $$ p\in M^*, p' \in L'_p\cap M^0 \Rightarrow p \not \in L'_{p'}\,.$$
 On the preimage of the manifold part, however, it is
 $$ p\in M, p' \in L'_p\cap M^* \Rightarrow p \in L'_{p'}\,.$$
\end{Rm}

\begin{Ex}\label{ex:1} Consider  on $M= \RR^3$ the action of $G$ of rotations about the last axis $N = \{(0,0,z)^T: z \in \RR\}$ and the points
 $$ p_1 = \left(\begin{array}{c} 0 \\ 0 \\ 1 \end{array}\right),\quad p_2 = \left(\begin{array}{c} 1 \\ -1 \\ 1 \end{array}\right),\quad p_3 = \left(\begin{array}{c} 2 \\ - 2 \\ 3 \end{array}\right),\quad p_4 = \left(\begin{array}{c} -2\\ 2\\ 3\end{array}\right)\,.$$
 Then $p_2,p_3,p_4 \in L_{p_1}, p_1,p_3 \in L'_{p_2} \subset L_{p_2}$ but $p_2\not\in L'_{p_1} = N$.
\end{Ex}

        \begin{Th}[Existence of optimal lifts]\label{th:exist-opt-measurable-lift} For every $p\in M$ there is a measurable mapping $\ell_p : Q \to M$ such that
         \begin{enumerate}[label=(\roman*)]
          \item $\pi\circ \ell_p = id_Q$  (lift),
          \item $\ell_p(q) \in L_p$ for all $q\in Q$  (optimal). 
         \end{enumerate}
        \end{Th}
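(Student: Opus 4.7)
The strategy is to exhibit $\ell_p$ as a Borel measurable selector of the closed-valued multifunction
$$\Phi\colon Q \to 2^M,\qquad q \mapsto \pi^{-1}(q) \cap L_p,$$
and then to invoke the Kuratowski--Ryll-Nardzewski measurable selection theorem. First, I would observe that $L_p$ is a closed subset of $M$: the map $p' \mapsto d(p,p') - d_Q([p],\pi(p'))$ is continuous because $d$, $d_Q$, and $\pi$ all are, and $L_p$ is precisely its zero set. For nonemptiness of $\Phi(q)$, recall that the orbit $\pi^{-1}(q)$ is a closed embedded submanifold (a standard consequence of the proper action, already noted in the setup). By Hopf--Rinow applied to the complete manifold $M$, closed bounded subsets are compact, so picking any $p_0 \in \pi^{-1}(q)$ and setting $R = d(p,p_0)$, the continuous function $d(p,\cdot)$ attains its minimum on the compact set $\pi^{-1}(q) \cap \overline{B(p,R)}$; this minimum equals $d_Q([p],q)$, witnessing $\Phi(q) \neq \emptyset$. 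Being the intersection of two closed sets, $\Phi(q)$ is closed.

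The crux is verifying weak measurability of $\Phi$, i.e.\ that for every open $U \subseteq M$,
$$\Phi^{-1}(U) \;=\; \{q \in Q : \Phi(q) \cap U \neq \emptyset\} \;=\; \pi(L_p \cap U)$$
is Borel in $Q$. Since $M$ is metrizable, one writes $U = \bigcup_n F_n$ with each $F_n$ closed, so
$$\pi(L_p \cap U) \;=\; \bigcup_n \pi(L_p \cap F_n).$$
The key geometric input here is that the proper action of the Lie group $G$ on the locally compact space $M$ renders $\pi$ a closed map; therefore each $\pi(L_p \cap F_n)$ is closed in $Q$, and $\Phi^{-1}(U)$ is $F_\sigma$, in particular Borel.

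Finally, since $M$ is a Polish space (complete, separable via the countable atlas) and $\Phi$ is nonempty-, closed-valued, and weakly measurable into $M$, Kuratowski--Ryll-Nardzewski delivers a Borel measurable selector $\ell_p\colon Q \to M$ with $\ell_p(q) \in \Phi(q)$ for every $q \in Q$. Conditions (i) $\pi\circ \ell_p = id_Q$ and (ii) $\ell_p(q) \in L_p$ then hold by the very definition of $\Phi$. I expect the main technical point to require care is the closedness of $\pi$ under the proper $G$-action (a standard but nontrivial consequence of properness on locally compact spaces); everything else is routine measure-theoretic bookkeeping combined with the continuity of the distance functions supplied by the setup.
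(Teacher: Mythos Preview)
Your proposal is correct and follows essentially the same route as the paper: both set up the closed-valued multifunction $q\mapsto L_p\cap\pi^{-1}(q)$ and invoke a measurable selection theorem (the paper cites Bogachev's Theorem~6.9.3, which is a variant of Kuratowski--Ryll-Nardzewski). Your verification of weak measurability via writing $U=\bigcup_n F_n$ and using closedness of $\pi$ under the proper action is in fact more explicit than the paper's, which simply asserts that $\pi(L_p\cap U)$ is measurable without spelling out why.
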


  \begin{proof}
   Let $p \in M$. By continuity, $L_p$ is closed and hence so is $\emptyset \neq L_p \cap \pi^{-1}(q) \subseteq M$ for all $q\in Q$. Setting
   \begin{eqnarray*}
    \Psi : Q &\to& \{  \emptyset \neq A \subset M: A \mbox{ closed}\}\\
    q &\mapsto & L_p \cap \pi^{-1}(q)\,,
   \end{eqnarray*}
   apply \cite[Theorem 6.9.3]{Bog07-2} stating that there exists a measurable map $\xi: Q \to M$ ($M$ is called $X$ in \cite{Bog07-2} and $Q$ is called $\Omega$ there) with $\xi(q) \in \Psi(q)$, if 
   $$ \{q\in Q: \Psi(q) \cap U \neq\emptyset\}\mbox{ is measurable for all open }U\subseteq M\,.$$
   Indeed, this condition is met, since for open $U\subseteq M$, $W:=L_p \cap U$ is measurable in $M$, and hence, so is
   \begin{eqnarray*}
   \{q\in Q:  \Psi(q) \cap U \neq\emptyset\} &=& \pi\{p' \in M:  W\cap \pi^{-1}(\pi(p')) \neq \emptyset)\} ~=~   \pi(W)\,.
   \end{eqnarray*}
   Thus $\ell_p := \xi$ is a measurable optimal lift.
  \end{proof}

    \begin{Def}
     Every such $\ell_p$ from Theorem \ref{th:exist-opt-measurable-lift} is called an \emph{optimal lift} of $Q$ \emph{through} $p\in M$ (we drop the adjective  'measurable').
    \end{Def}

    The following theorem states that Fr\'echet means are equilibrium points, i.e. classical means in the inverse exponential chart. 
    
    \begin{Th}[\cite{LeBarden2014}, Corollaries 2 and 3]\label{th:Le-Barden}
     If $\mu \in \argmin_{p\in M}F^X(p)$ then
     \begin{enumerate}[label=(\roman*)]
      \item the domain $0\in V\subset  T_\mu M$ of $\exp^{-1}_\mu$ can be chosen such that $\Prb\{X \in \exp_\mu V\} = 1$ and 
      \item $\int_M \exp^{-1}(p) d\Prb^X(p) =0$.
    \end{enumerate}

    \end{Th}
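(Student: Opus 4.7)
The plan is to first establish part (i)---that the probability of $X$ hitting the cut locus of a Fr\'echet mean $\mu$ is zero---and then to deduce (ii) as the first-order optimality condition for the now almost-surely-smooth function $F^X$ at $\mu$. The crucial lever for (i) is the standing multiplicity assumption on the cut locus, which guarantees that every $p'\in \Cut(\mu)$ is reached by at least two distinct minimizing geodesics from $\mu$.

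First I would compute one-sided directional derivatives at $\mu$ of the map $p\mapsto d(p,X)^2$ for a fixed sample $X$. Away from the cut locus this map is smooth near $\mu$ with gradient $-2\,\exp_\mu^{-1}(X)$. For $X\in\Cut(\mu)$ reached by minimizing geodesics with initial velocities $v_1(X),v_2(X),\ldots\in T_\mu M$, all of norm $d(\mu,X)$, a Gauss-lemma plus normal-coordinates computation yields
\[
 \lim_{t\downarrow 0}\frac{d(\exp_\mu(tw),X)^2-d(\mu,X)^2}{t} \;=\; -2\max_i \langle v_i(X),w\rangle,
\]
the maximum arising because the geodesic distance is attained by whichever lift is closer to $tw$. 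Integrating over $\Prb^X$ via dominated convergence---the difference quotient being dominated by $2\|w\|(d(\mu,X)+\|w\|)$ via the triangle inequality, which is $\Prb^X$-integrable by the standing hypothesis---yields a formula for the one-sided derivative $D_wF^X(\mu)$ as the sum of a smooth contribution from $\{X\notin \Cut(\mu)\}$ and a $\max$-contribution from $\{X\in \Cut(\mu)\}$.

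Next I would exploit $D_w F^X(\mu)\ge 0$ for both $w$ and $-w$ at the minimizer $\mu$. Adding the two inequalities annihilates the smooth part by antisymmetry of $\langle \exp_\mu^{-1}(X),\cdot\rangle$ and leaves
\[
 \EE\bigl[\bigl(\max_i\langle v_i(X),w\rangle+\max_i\langle v_i(X),-w\rangle\bigr)\mathbf{1}_{\{X\in \Cut(\mu)\}}\bigr]\;\le\; 0.
\]
The integrand is pointwise nonnegative---it equals the \emph{width} of the finite set $\{v_i(X)\}$ in direction $w$---and strictly positive at any $X$ for which some $v_i(X)\ne v_j(X)$ and $w$ separates them. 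Running $w$ over a countable dense family and intersecting the resulting $\Prb^X$-null sets forces, almost surely on $\{X\in \Cut(\mu)\}$, the width to vanish on a dense set of directions and hence on every direction by continuity, i.e.\ all $v_i(X)$ coincide. This contradicts the standing multiplicity assumption unless $\Prb\{X\in \Cut(\mu)\}=0$. Taking $V\subset T_\mu M$ to be the maximal open star-shaped injectivity domain of $\exp_\mu$ then yields (i).

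With (i) in place, $F^X$ is $C^1$ at $\mu$ with $\nabla F^X(\mu) = -2\,\EE[\exp_\mu^{-1}(X)]$, and (ii) drops out of $\nabla F^X(\mu)=0$. The main obstacle I anticipate is the honest derivation of the $\max$-formula for the one-sided derivative at cut points: one must verify that strictly longer competing geodesics to $X$ contribute trivially to the derivative at $t=0^+$ (they give strictly looser distance bounds for small $t$) so that the limit genuinely picks out the $\max$ over the set of \emph{minimizing} lifts, and one must simultaneously justify the interchange of pointwise limit with expectation---both handled by the uniform triangle-inequality domination above.
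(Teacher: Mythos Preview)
The paper does not give its own proof of this theorem: it is quoted verbatim from \cite{LeBarden2014} (their Corollaries~2 and~3) and used as a black box in the subsequent arguments. So there is no proof in the paper to compare against.

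Your proposal is a correct outline of how one establishes the result and is in the spirit of Le--Barden's original argument. The key mechanism---that at a minimizer the one-sided derivatives $D_wF^X(\mu)$ and $D_{-w}F^X(\mu)$ are both nonnegative, and that adding them kills the smooth part and leaves a nonnegative ``width'' term supported on $\{X\in\Cut(\mu)\}$---is exactly the right idea, and the standing multiplicity assumption on cut points (item~5 in the Setup) is precisely what forces that width to be strictly positive whenever $X\in\Cut(\mu)$, yielding $\Prb\{X\in\Cut(\mu)\}=0$.

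Two technical points are worth tightening. First, the one-sided derivative formula $\lim_{t\downarrow 0}t^{-1}\big(d(\exp_\mu(tw),X)-d(\mu,X)\big)=-\max_i\langle v_i(X)/\|v_i(X)\|,w\rangle$ at a cut point is itself a nontrivial statement: the upper bound follows from first variation along each minimizing geodesic, but the matching lower bound requires a compactness argument on the initial velocities of minimizers from $\exp_\mu(tw)$ as $t\downarrow 0$. You correctly flag this as the main obstacle; it is precisely where the substance of \cite{LeBarden2014} lies. Second, when the set of minimizing initial velocities $\{v_i(X)\}$ is infinite (possible at conjugate cut points), your $\max_i$ should be read as a supremum over a compact set; measurability of $X\mapsto\sup_i\langle v_i(X),w\rangle$ is then automatic because it arises as the pointwise limit of the measurable difference quotients. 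With these caveats your argument goes through, and part~(ii) indeed follows from~(i) by differentiating under the integral.
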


    We are now ready for the toolbox of this section.
    
    \begin{Lem}\label{lem:basic-lifts}
     The following hold:
     \begin{enumerate}[label=(\roman*)]
      \item $F^{\ell_p\circ \pi \circ X}(p) = F^{\pi\circ X}(\pi(p))$ for all $p\in M$
     \end{enumerate}
     Moreover, if $\nu \in \argmin_{q\in Q}  F^{\pi\circ X}(q)$, $\mu \in \pi^{-1}(\nu)$ and optimal lift $\ell_\mu$ through $\mu$, then
     \begin{enumerate}[label=(\roman*)] \setcounter{enumi}{1}
      \item $\mu \in  \argmin_{p\in M}  F^{\ell_\mu \circ \pi\circ X}(p)$
      \item if $p' \in  \argmin_{p\in M}  F^{\ell_\mu \circ \pi\circ X}(p)$ then $\pi(p') \in \argmin_{q\in Q}  F^{\pi\circ X}(q)$
      \item if $\ell'_\mu$ is another optimal lift through $\mu$ then $\ell'_\mu = \ell_\mu$ a.s.
      \item if $\Prb\{\pi \circ X \in Q^*\} > 0$ then $\nu \in Q^*$
      \item if $\{\nu\} = \argmin_{q\in Q}  F^{\pi\circ X}(q)$ and $\Prb\{X \in L'_\mu\} >0$  then $\{\mu\} = \argmin_{p\in M}  F^{\ell_\mu \circ \pi\circ X}(p)$
     \end{enumerate}     
    \end{Lem}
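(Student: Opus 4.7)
My plan is to exploit throughout the fundamental inequality
\[ d(p',q) \ge d_Q(\pi(p'), \pi(q))\quad\text{for all } p', q \in M, \]
with equality precisely when $p'$ and $q$ are in optimal position, together with the defining property $\ell_p(q) \in L_p \cap \pi^{-1}(q)$ of an optimal lift. Assertion (i) then follows immediately: pointwise $d(p,\ell_p(\pi X)) = d_Q(\pi p, \pi X)$, and squaring and taking expectations gives (i). For (ii), I would combine (i) with the fundamental inequality to obtain, for every $p' \in M$,
\[ F^{\ell_\mu \circ \pi \circ X}(p') \ge F^{\pi \circ X}(\pi p') \ge F^{\pi \circ X}(\nu) = F^{\ell_\mu \circ \pi \circ X}(\mu), \]
so $\mu$ is a minimizer. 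Reading the same chain backwards proves (iii): any minimizer $p'$ of $F^{\ell_\mu \circ \pi \circ X}$ satisfies $F^{\pi \circ X}(\pi p') \le F^{\ell_\mu \circ \pi \circ X}(p') \le F^{\pi \circ X}(\nu)$, so $\pi p'$ minimizes $F^{\pi \circ X}$.

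Assertion (iv) I would derive from Theorem \ref{th:cont-and-uniq-opt-lift}: the set $N \subseteq Q$ of $q$ for which $L_\mu \cap \pi^{-1}(q)$ is not a singleton has pushed-forward Riemannian volume zero, hence any two optimal lifts through $\mu$ coincide on $Q \setminus N$, and the usual regularity transfers this to a.s.\ equality under $\Prb^{\pi X}$.

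The main obstacle will be (v). The key observation is that, for every $g \in I_\mu$, the map $\ell'_\mu := g \circ \ell_\mu$ is again an optimal lift through $\mu$: it is measurable, $\pi \circ \ell'_\mu = \mathrm{id}_Q$ because $g$ preserves orbits, and $d(\mu, g.\ell_\mu(q)) = d(\mu, \ell_\mu(q)) = d_Q(\pi \mu, q)$ since $g$ is an isometry fixing $\mu$. By (iv) then $g.\ell_\mu(\pi X) = \ell_\mu(\pi X)$ a.s., i.e.\ $g \in I_{\ell_\mu(\pi X)}$ a.s. Taking a countable dense subset of the compact Lie group $I_\mu$ and intersecting the corresponding null sets upgrades this to $I_\mu \subseteq I_{\ell_\mu(\pi X)}$ a.s. If now $\Prb\{\pi X \in Q^*\} > 0$, then $\ell_\mu(\pi X) \in M^*$ with positive probability, so $I_{\ell_\mu(\pi X)} = \{e\}$ on that event, forcing the deterministic subgroup $I_\mu = \{e\}$, hence $\mu \in M^*$ and $\nu \in Q^*$. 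The delicate points are transferring the Riemannian-a.e.\ uniqueness of Theorem \ref{th:cont-and-uniq-opt-lift} to an a.s.\ statement for $\pi \circ X$, and handling the (possibly continuous) isotropy action in a measurable way.

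Finally, (vi) strengthens (iii) to uniqueness. For any further minimizer $p'$ of $F^{\ell_\mu \circ \pi \circ X}$, (iii) together with uniqueness of $\nu$ yields $\pi p' = \nu$, so $p' \in [\mu]$, and the chain of inequalities in (ii) must collapse to equalities, forcing $d(p', \ell_\mu(\pi X)) = d(\mu, \ell_\mu(\pi X))$ a.s. On the positive-probability event $\{X \in L'_\mu\}$ one has $\ell_\mu(\pi X) = X$, so $d(p', X) = d(\mu, X) = d_Q(\pi p', \pi X)$, i.e.\ $p' \in L_X \cap [\mu]$. The symmetry of optimal positioning on the preimage of the manifold part (the Remark following the definition of $L'_p$) then gives $\mu \in L'_X$, so $L_X \cap [\mu] = \{\mu\}$ and hence $p' = \mu$.
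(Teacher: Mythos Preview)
Your arguments for (i)--(iii), (v) and (vi) follow essentially the paper's approach; for (v) the paper uses a single $g \in I_\mu \setminus \{e\}$ rather than a countable dense subset, but the key idea---that $g \circ \ell_\mu$ is again an optimal lift through $\mu$ and hence equals $\ell_\mu$ a.s.\ by (iv)---is identical.

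The genuine gap is in (iv), and it propagates to (v) since your (v) relies on (iv). You propose to invoke Theorem~\ref{th:cont-and-uniq-opt-lift}, but that result is stated only for $p \in M^*$. If $\mu \in M^0$, the set of $q$ with $|L_\mu \cap \pi^{-1}(q)| > 1$ need \emph{not} have pushed-forward volume zero: in Example~\ref{ex:1} one has $L'_{p_1} = N$, so optimal lifts through $p_1 \in M^0$ are non-unique on all of $Q^*$. Since (v) proceeds by contradiction, \emph{assuming} $\mu \in M^0$ and then invoking (iv), your route is circular. There is a second, independent obstruction: even granting $\vol_\sharp(N) = 0$, passing to $\Prb^{\pi \circ X}(N) = 0$ requires absolute continuity of $X$ with respect to $\vol$, which is not assumed in this lemma (it enters only later, e.g.\ in Theorem~\ref{th:SL-optimal-lift}). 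You flag this yourself as ``delicate'', but under the stated hypotheses it cannot be resolved.

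The paper's proof of (iv) avoids both obstacles by a different device. By (ii), $\mu$ is a Fr\'echet mean of $\ell_\mu \circ \pi \circ X$ for \emph{every} optimal lift $\ell_\mu$ through $\mu$, so the Le--Barden equilibrium condition (Theorem~\ref{th:Le-Barden}) gives $\int_Q \exp_\mu^{-1} \circ \ell_\mu \, d\Prb^{\pi X} = 0$ for every such lift. A cut-and-paste trick---replacing $\ell_\mu$ by $\ell'_\mu$ on an arbitrary Borel set $B \subseteq Q$ produces yet another optimal lift $\ell''_\mu$, whose integral must also vanish---then yields $\int_B \exp_\mu^{-1} \circ \ell_\mu = \int_B \exp_\mu^{-1} \circ \ell'_\mu$ for all $B$, whence $\ell_\mu = \ell'_\mu$ $\Prb^{\pi X}$-a.e. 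This argument works for arbitrary $\mu \in \pi^{-1}(\nu)$ (whether in $M^0$ or not) and for arbitrary $X$, which is exactly what (v) needs.
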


\begin{proof}
 (i) follows from $d(p,\ell_p(q')) = d_Q(q,q')$ for arbitrary $p\in M$ with $q = \pi(p)$ and from
 \begin{eqnarray*}
   F^{\ell_\mu \circ \pi\circ X}(p)&=& \int_M d(p,p')^2\, d\Prb^{\ell_\mu \circ \pi\circ X}(p')\\
   &=& \int_M d(p,\ell_p(q'))^2\, d\Prb^{\pi\circ X}(q') ~=~ F^{\pi\circ X}(\pi(p))\,.
 \end{eqnarray*}

 (ii) follows at once from (i).
 
 (iii): if $p' \in  \argmin_{p\in M}  F^{\ell_\mu \circ \pi\circ X}(p)$ then, due to (ii) and twice (i), 
 \begin{eqnarray*}
 F^{\ell_\mu \circ \pi\circ X}(p') &=&  F^{\ell_\mu \circ \pi\circ X}(\mu)~=~F^{\pi \circ X}(\pi(\mu))\\
 &\leq & F^{\pi \circ X}(\pi(p')) ~=~ F^{\ell_{p'} \circ \pi\circ X}(p')\\
 &=& \int_Q d(p'\ell_{p'}(q))^2 \, d\Prb^{\pi \circ X}\\
 &\leq &\int_Q d(p'\ell_{\mu}(q))^2 \, d\Prb^{\pi \circ X} ~=~F^{\ell_\mu \circ \pi\circ X}(p')\,. 
 \end{eqnarray*}
 Hence, all inequalities are equalities and thus $\pi(p') \in  \argmin_{q\in Q}  F^{\pi\circ X}(q)$.
 
 (iv): Let $\ell_\mu$ and $\ell'_\mu$ be two optimal lifts through $\mu$. Then, due to Theorem \ref{th:Le-Barden} and (ii),
 \begin{eqnarray}\label{eq:proof1-lem-basic-lifts} \nonumber
 \int_Q \exp_\mu^{-1}\circ \ell_\mu(q)\, d\Prb^{\pi\circ X}(q) &=& \int_M \exp_\mu^{-1}(p)\, d\Prb^{\ell_\mu \circ \pi\circ X}(p) ~=~0\\
 &=& \int_M \exp_\mu^{-1}(p)\, d\Prb^{\ell'_\mu \circ \pi\circ X}(p) ~=~\int_Q \exp_\mu^{-1}\circ \ell'_\mu(q)\, d\Prb^{\pi\circ X}(q)\,.
 \end{eqnarray}
 Now, in order to see (iv), it suffices to show that 
  \begin{eqnarray}\label{eq:proof2-lem-basic-lifts} 
   \int_B \exp_\mu^{-1}\circ \ell_\mu(q)\, d\Prb^{\pi\circ X}(q) &=& \int_B \exp_\mu^{-1}\circ \ell'_\mu(q)\, d\Prb^{\pi\circ X}(q)
  \end{eqnarray}
   for all Borel $B\subseteq Q$ and to this end, fix such a $B$ and define 
   $$ \ell''_\mu(q) = \left\{\begin{array}{rcl}
                              \ell_\mu(q)&\mbox{ if }& q\in B\\
                              \ell'_\mu(q)&\mbox{ if }& q\in Q\setminus B
                             \end{array}\right.\,,
  $$
  which is yet another optimal lift through $\mu$.
 Then, due to (\ref{eq:proof1-lem-basic-lifts}),
 \begin{eqnarray*}
  \int_{Q\setminus B} \exp_\mu^{-1}\circ \ell_\mu(q)\, d\Prb^{\pi\circ X}(q) + \int_B \exp_\mu^{-1}\circ \ell_\mu(q)\, d\Prb^{\pi\circ X}(q) &=& \int_Q \exp_\mu^{-1}\circ \ell_\mu(q)\, d\Prb^{\pi\circ X}(q) \\
  &=& \int_B \exp_\mu^{-1}\circ \ell''_\mu(q)\, d\Prb^{\pi\circ X}(q)\\
  ~=~\int_{Q\setminus B} \exp_\mu^{-1}\circ \ell_\mu(q)\, d\Prb^{\pi\circ X}(q) + \int_B \exp_\mu^{-1}\circ \ell'_\mu(q)\, d\Prb^{\pi\circ X}(q) \,.
 \end{eqnarray*}
  Subtracting $\int_{Q\setminus B} \exp_\mu^{-1}\circ \ell_\mu(q)\, d\Prb^{\pi\circ X}(q)$ from both sides yields the desired (\ref{eq:proof2-lem-basic-lifts}).

 (v): Assume the contrary, namely that $ \nu \in  \argmin_{q\in Q} F^{\pi \circ X}(q)$ and $\Prb\{\pi \circ X \in Q^*\} > 0$ but $\nu \in Q\setminus Q^*$, i.e. $\mu \in M^0$ for any $\mu \in \pi^{-1}(\nu)$ and we show a contradiction. Then, there is $e\neq g\in G$ with $g.\mu = \mu$ but $g.p \neq p$ for all $p \in M^*$. Moreover, then, consider an optimal lift $\ell_\mu$ through $\mu$ and note that $\ell'_\mu = g.\ell_\mu$, due to $d(\mu,g.\ell_\mu(p')) =  d(g^{-1}.\mu,\ell_\mu(p')) = d(\mu,\ell_\mu(p))$ for all $p\in M$ by virtue of isometric action, is also an optimal lift through $\mu$. Hence, by hypothesis and (iv),
 \begin{eqnarray*}
  0 ~<~ \Prb\{\pi \circ X \in Q^*\}  &=& \Prb^X(M^*) \\
  &\leq &\Prb^X\{p \in M: g.p\neq p\} ~=~\Prb^{\pi \circ X}\{q\in Q: \ell_\mu(q) = \ell'_{\mu}(q)\} ~=~0\,,
 \end{eqnarray*}
  the desired contradiction.
 
 (vi): Let $\{\nu\} = \argmin_{q\in Q}  F^{\pi\circ X}(q)$ and $\mu,\mu' \in  \argmin_{p\in M}  F^{\ell_\mu \circ \pi\circ X}(p)$. Due to (iii) $\mu,\mu' \in \pi^{-1}(\nu)$, i.e. there is $g \in G$ with $\mu'=g.\mu$. Hence
 $$\int_M d(\ell_\mu(p),\mu)^2\,d\Prb^{\ell_\mu \circ \pi\circ X}(p) = F^{\ell_\mu \circ \pi\circ X}(\mu) = F^{\ell_\mu \circ \pi\circ X}(g.\mu) = \int_M d(\ell_\mu(p),g.\mu)^2\,d\Prb^{\ell_\mu \circ \pi\circ X}(p)\,. $$ 
Since $d(\ell_\mu(p),\mu) \leq d(\ell_\mu(p),g.\mu)$ for all $p\in M$, in consequence of $a^2-b^2 = (a+b)(a-b)$ we have
 $$\Prb^{\ell_\mu \circ \pi\circ X}\{p \in \ell_\mu(Q): d(p,\mu) = d(p,g.\mu)\} =1\,. $$
 Thus, by hypothesis,
 $$\Prb^{\ell_\mu \circ \pi\circ X}\{p \in \ell_\mu(Q) \cap L'_\mu: d(p,\mu) = d(p,g.\mu)\} >0\,, $$
 yielding that there is $p \in \ell_\mu(Q) \cap L'_\mu$ with $d(p,\mu) = d(g^{-1}.p,\mu)$, i.e. $g^{-1}.p =p$, hence $g=e$ and thus $\mu'=g.\mu = \mu$.
\end{proof}

\section{Smoothness of Optimal Lifts}\label{scn:smooth}

\begin{Def} Let $P\subset M$ be a closed submanifold with \emph{normal bundle} $NP$, a subbundle of $TM$. Then  the cut locus of $P$ is defined as
     \begin{eqnarray*} \Cut(P)&:=& \{\exp_p(v): (p,v) \in NP \st \\
      &&\hspace*{2cm}
d(\exp_p(tv),P) = t\|v\| \mbox{ for all }0\leq t \leq 1\mbox{ but }\\
      &&\hspace*{2cm}
d(\exp_p(tv),P) < t\|v\| \mbox{ for  all } t>1\}\,.     
     \end{eqnarray*}
      \end{Def}

      Since $\pi^{-1}(q)$ is a closed submanifold of $M$ for every $q\in Q$, with its normal bundle $N\pi^{-1}(q)$, we have a smooth mapping 
    \begin{eqnarray}\label{eq:Phiq} \Phi_{q}: N\pi^{-1}(q) \to M\, \quad (p,v) &\mapsto& \exp_{p}v\,.
    \end{eqnarray}
      Note that $(p,v)\in N\pi^{-1}(q)$ if and only if $v\in H_{p}M$.

      \begin{Th}[\cite{basu2023connection}, p. 4218]\label{th:basu}
       For every $q \in Q$ the inverse map $\Phi_q^{-1}$ is well defined and smooth on $M\setminus (\Cut(\pi^{-1}(q))\cup \pi^{-1}(q))$.
      \end{Th}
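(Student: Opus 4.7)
The plan is to construct $\Phi_q^{-1}$ pointwise on $M\setminus(\Cut(P)\cup P)$, where $P:=\pi^{-1}(q)$ is a closed embedded submanifold of $M$, and then to derive its smoothness from the inverse function theorem applied to $\Phi_q$ at that preimage. Note $NP$ has dimension $\dim P+\operatorname{codim}P=\dim M$, so $\Phi_q$ is a smooth map between manifolds of equal dimension, and the inverse function theorem is available in principle.

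\textbf{Well-definedness of the pointwise inverse.} Fix $p'\in M\setminus(\Cut(P)\cup P)$. Since $M$ is complete and $P$ is closed, $r:=d(p',P)>0$ is attained at some foot point $p\in P$; the first variation of arclength forces any minimising geodesic $\gamma:[0,1]\to M$ from $p$ to $p'$ to satisfy $\dot\gamma(0)\perp T_pP$, so $v:=\dot\gamma(0)\in N_pP$, and I set $\Phi_q^{-1}(p'):=(p,v)$. For uniqueness, suppose either the foot point or the minimising geodesic from it were not unique. Then two distinct minimising normal geodesics $\gamma_1,\gamma_2:[0,1]\to M$ of common speed $r$ would meet at $p'$ with $\dot\gamma_1(1)\neq\dot\gamma_2(1)$. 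Concatenating $\gamma_2|_{[0,1]}$ with $\gamma_1|_{[1,1+\varepsilon]}$ yields a broken curve from $P$ to $\gamma_1(1+\varepsilon)$ of length $(1+\varepsilon)r$ with a genuine corner at $p'$; smoothing the corner by a short geodesic arc produces a strictly shorter curve, whence $d(\gamma_1(1+\varepsilon),P)<(1+\varepsilon)r$ for small $\varepsilon>0$. This places $p'$ in $\Cut(P)$, contradicting the hypothesis on $p'$. Thus $\Phi_q^{-1}$ is unambiguously defined.

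\textbf{Smoothness via the inverse function theorem.} At $(p,v)=\Phi_q^{-1}(p')$, the kernel of $D\Phi_q(p,v)$ is in natural bijection with non-trivial $P$-Jacobi fields $J$ along $\gamma(t)=\exp_p(tv)$ satisfying the boundary data $J(0)\in T_pP$, $(\nabla_tJ)(0)+S_vJ(0)\in N_pP$ (where $S_v$ denotes the shape operator of $P$ in direction $v$), and $J(1)=0$; equivalently, $D\Phi_q(p,v)$ is singular iff $p'$ is a focal point of $P$ along $\gamma$. The standard Morse-index argument for normal geodesics from a submanifold yields the inequality $t_{\mathrm{foc}}\geq t_{\mathrm{cut}}$ along $\gamma$, where $t_{\mathrm{cut}}$ is the supremum of $t$ with $d(\gamma(s),P)=s\|v\|$ on $[0,t]$. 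The previous paragraph already shows $t_{\mathrm{cut}}>1$ at our $(p,v)$, hence $t_{\mathrm{foc}}>1$ and $D\Phi_q(p,v)$ is invertible; the inverse function theorem then provides a smooth local inverse of $\Phi_q$ around $(p,v)$, which by uniqueness coincides with the pointwise $\Phi_q^{-1}$ constructed above. The technically delicate step is the focal-versus-cut inequality $t_{\mathrm{foc}}\geq t_{\mathrm{cut}}$: one must invoke Jacobi/Morse theory with $P$-boundary conditions (rather than the point-based $J(0)=0$) and control the index of $\gamma$ with endpoint constraint in $P$; the remaining ingredients -- completeness, first variation, and corner-cutting -- are routine Riemannian tools.
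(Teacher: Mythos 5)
The paper offers no proof of this statement: it is imported verbatim from \cite{basu2023connection} (p.~4218), so there is no internal argument to compare yours against. Your route --- metric projection onto the closed submanifold $P=\pi^{-1}(q)$ exists by completeness, first variation makes minimizing geodesics normal, corner-cutting gives uniqueness of the foot-point pair off $\Cut(P)$, and the focal-versus-cut comparison $t_{\mathrm{foc}}\geq t_{\mathrm{cut}}>1$ makes $D\Phi_q(p,v)$ invertible so the inverse function theorem applies --- is the classical tube/Fermi-coordinate argument (essentially the machinery of \cite{gray2003tubes}, which the paper invokes elsewhere, cf.\ Remark~\ref{rm:th-basu}), and it is essentially sound; what it buys over the paper's bare citation is a self-contained proof from standard submanifold cut/focal theory. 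Two steps should be made explicit. First, the paper's definition of $\Cut(P)$ demands $d(\exp_p(tv),P)<t\|v\|$ for \emph{all} $t>1$; your corner argument only gives this for $t$ slightly larger than $1$, so add the routine triangle-inequality observation that once a normal geodesic stops minimizing distance to $P$ it never minimizes again (this also yields $t_{\mathrm{cut}}>1$ directly from $p'\notin\Cut(P)$, which is really what you use, rather than the uniqueness paragraph). Second, ``by uniqueness coincides with the pointwise $\Phi_q^{-1}$'' is too quick: the inverse function theorem only inverts $\Phi_q$ on a small neighborhood $V$ of $(p,v)$ in $NP$, and the minimizing pair of a nearby point $p''$ could a priori lie outside $V$, so injectivity of $\Phi_q|_V$ alone does not identify the two maps. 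You need the standard compactness argument --- for $p''_n\to p'$ the minimizing pairs are bounded since $\|v_n\|=d(p''_n,P)\to d(p',P)$, any cluster point is a minimizing pair for $p'$, hence equals $(p,v)$ by your uniqueness --- to see that the metric-projection inverse is continuous at $p'$ and therefore eventually lands in $V$, after which it must agree with the smooth local inverse. With these two routine additions the argument is complete.
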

      
      \begin{Rm}\label{rm:th-basu} Note that $\Phi_q^{-1}$ can be continuously extended to any $p\in \pi^{-1}(q))$ via $\Phi_q^{-1}(p) = (p,0)$, since for $\pi^{-1}(q))\not \ni p'_n \to p\in \pi^{-1}(q))$ and $\Phi_q^{-1}(p'_n) = (p_n,v_n)$ with $d(p'_n,p_n) = \|v_n\| \leq d(p'_n,p) \to 0$. In fact, this extension is smooth by \citet[Lemma 2.3]{gray2003tubes}.
    \end{Rm}
      
      \begin{Cor}\label{cor:lift-smooth} 
      Let $q\in Q$ and $p'\in M\setminus \Cut(\pi^{-1}(q))$ with $\Phi^{-1}(p') = (p,v)$. Then
       $$p' \in L_{p}\mbox{ and }p\in L'_{p'}\,.$$
       Moreover, for every open $U \subseteq Q$ and $p' \in M \setminus C$ where $C:=\cup_{q\in U} \Cut(\pi^{-1}(q))$, there is an optimal lift through $p'$ of form
       $$ \ell_{p'}(q)= \exp_pv \mbox{ for }q\in U,\quad \mbox{ where }\Phi_q^{-1}(p') = (p,v)\,.$$
       Further, this lift varies smoothly in $p'\in M\setminus C$ and also in $q\in U$ if $U\subseteq Q^*$ and $p' \in M^*\setminus C$.
      \end{Cor}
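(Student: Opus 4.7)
The plan is to use Theorem \ref{th:basu} as the central tool: the tubular neighborhood map $\Phi_q$ has a smooth inverse away from the cut locus of the submanifold $\pi^{-1}(q)$.

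For the first claim I would unpack $(p,v)=\Phi_q^{-1}(p')$: by construction $p\in \pi^{-1}(q)$, $v\in H_pM$, and $\exp_p v=p'$. The defining property of $\Cut(\pi^{-1}(q))$ combined with well-definedness of $\Phi_q^{-1}(p')$ implies that $t\mapsto \exp_p(tv)$ on $[0,1]$ realizes $d(p',\pi^{-1}(q))$ and is the only normal geodesic doing so. Hence $d(p,p')=\|v\|=d(p',\pi^{-1}(q))=d_Q([p'],q)$, so $p'\in L_p$; since $p$ is the unique foot of this perpendicular in $[p]=\pi^{-1}(q)$, also $p\in L'_{p'}$.

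For the existence of the claimed optimal lift I would define $\ell_{p'}(q)$ for $q\in U$ to be the base-point $p\in\pi^{-1}(q)$ extracted from $(p,v)=\Phi_q^{-1}(p')$, and extend to $Q\setminus U$ by gluing with any measurable optimal lift supplied by Theorem \ref{th:exist-opt-measurable-lift}. The first claim ensures $\ell_{p'}(q)\in L_{p'}$ and $\pi\circ\ell_{p'}=\mathrm{id}_Q$, so this is indeed an optimal lift through $p'$. Smoothness in $p'\in M\setminus C$ for each fixed $q\in U$ is then immediate by composing the smooth $\Phi_q^{-1}$ of Theorem \ref{th:basu} (extended across $\pi^{-1}(q)$ by Remark \ref{rm:th-basu}) with the smooth projection $(p,v)\mapsto p$.

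The substantive step is smoothness in $q$ when $p'\in M^*\setminus C$ and $U\subseteq Q^*$, which I would prove locally near any $q_0\in U$. Using that $Q^*$ is geodesically convex and that the horizontal lift of a $Q^*$-geodesic to $M^*$ is a geodesic in $M$ (property (ix) of the setup), I get a horizontal geodesic in $M^*$ from $p'$ to $p_0:=\ell_{p'}(q_0)$ with initial velocity $\tilde w_0\in H_{p'}M\cong T_{\pi(p')}Q^*$. Since $(p_0,v_0):=\Phi_{q_0}^{-1}(p')$ is a regular value of $\Phi_{q_0}$, the absence of focal points of $\pi^{-1}(q_0)$ along the geodesic from $p_0$ to $p'$ transfers through the Riemannian submersion $\pi|_{M^*}$ into the absence of conjugate points of $q_0$ along the projected geodesic in $Q^*$, so $\exp^{Q^*}_{\pi(p')}$ is a local diffeomorphism at $\tilde w_0$. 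For $q$ near $q_0$, setting $\tilde w(q):=(\exp^{Q^*}_{\pi(p')})^{-1}(q)$ yields a smooth map into $H_{p'}M$, and $\exp_{p'}(\tilde w(q))\in \pi^{-1}(q)$ is at distance $\|\tilde w(q)\|=d_Q(\pi(p'),q)$ from $p'$; uniqueness from the first claim then forces $\ell_{p'}(q)=\exp_{p'}(\tilde w(q))$, manifestly smooth in $q$.

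The hard part will be the focal-to-conjugate point translation between $M$ and $Q^*$: because $Q^*$ is only geodesically convex and not complete, the use of property (ix) to transfer the regularity of $\Phi_{q_0}^{-1}$ at $p'$ into nondegeneracy of $\exp^{Q^*}_{\pi(p')}$ at $\tilde w_0$ requires care, and it is precisely here that the hypotheses $p'\in M^*$ and $U\subseteq Q^*$ are actually used.
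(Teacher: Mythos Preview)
Your argument is correct and follows essentially the same route as the paper's: the first three parts coincide, and for smoothness in $q$ both proofs express $\ell_{p'}(q)=\exp_{p'}(w)$ with $w\in H_{p'}M$ the horizontal lift of the initial velocity of the $Q^*$-geodesic from $q'=\pi(p')$ to $q$. The only cosmetic difference is that the paper obtains this vector as $-\theta_{q,q'}\big((\exp^{Q^*}_q)^{-1}(q')\big)$ via parallel transport and simply asserts its smooth dependence on $q$, whereas you invert $\exp^{Q^*}_{q'}$ directly near $\tilde w_0$ and make the needed regularity explicit through the focal-to-conjugate transfer.
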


      \begin{proof} 
       By definition, if $p'= \exp_pv \in M\setminus (\Cut(\pi^{-1}(q))\cup \pi^{-1}(q))$ with $\Phi^{-1}(p')= (p,v)$ then $d(p',\pi^{-1}(q)) = \|v\| = d(p',p)$ so that $p' \in L_{p}$. If also $g.p \in L_{p'}$ for $e\neq g\in G$ then there would be two geodesics from $\pi^{-1}(q)$ to $p'$, a contradiction to $p' \not\in \Cut(\pi^{-1}(q))$.
       
       The second assertion follows at once from the first assertion by fixing $p' \in M\setminus C$, taking any optimal lift through $p'$ and modifying it on the measurable set $U$.
       
       For the last assertion, smoothness in $p' \in M\setminus C$ follows from Theorem \ref{th:basu} and Remark \ref{rm:th-basu}. 
       
       If $U \subseteq Q^*$, since $Q^*$ is a manifold, due to geodesic convexity of $Q^*$, there is $v\in T_qM$, varying smoothly in $q\in Q^*$, such that $q'=\pi(p') = \exp_q^{Q^*}v$ and hence
       $$ q = \exp_{q'}^{Q^*} (-\theta_{q,q'} (v))$$
       where $\theta_{q,q'}: T_qQ^* \to T_{q'}Q^*$ is the parallel transport which is smooth. Similarly, for $p \in L'_{p'}$ (see first assertion), $p' = \exp_{p}v$, where we have identified $H_pM \cong T_qQ^*$, i.e. 
       $$ \ell_{p'}(q) = p = \exp_{p'} (-\theta_{q,q'} (v))$$
       which varies smoothly in $q\in U \subseteq Q^*$.
      \end{proof}

%
%
%
%
%

\section{The Strong Law for Optimal Lifts}

\begin{Th}[The Strong Law for Optimal Lifts]\label{th:SL-optimal-lift} Let $X$ be absolutely continuous w.r.t. $\vol$ and with unique Fr\'echet mean
$$\{\nu\} = \argmin_{q\in Q} F^{\pi \circ X}(q)\mbox{  and measurable selection }\nu_n \in \argmin_{q\in Q} F_n^{\pi \circ X}(q)\,.$$
Then, for arbitrary 
$$\mu \in \pi^{-1}(\nu)\mbox{ and measurable selection }\mu_n \in \pi^{-1}(\nu_n) \cap L_\mu$$
and arbitrary optimal lifts $\ell_\mu$ and $\ell_{\mu_n}$ through $\mu$ and $\mu_n$, respectively, we have that
$$\ell_{\mu_n} \to \ell_\mu\as$$
in the sense that
$$ \int_Q d(\ell_{\mu_n}(q),\ell_\mu(q))\, d\Prb^{\pi \circ X}(q) \to 0 \as\,.$$
\end{Th}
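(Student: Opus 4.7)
The overall plan is to deduce the $L^1$-style convergence from an almost-sure pointwise statement combined with dominated convergence. Using the triangle inequality and the identity $d(\ell_p(q'),p)=d_Q(\pi(p),q')$, one obtains
$$d(\ell_{\mu_n}(q),\ell_\mu(q))\le d_Q(\nu_n,q)+d(\mu_n,\mu)+d_Q(\nu,q)\le 2\,d_Q(\nu,q)+C_\omega$$
for all sufficiently large $n$ and a random constant $C_\omega<\infty$. The map $q\mapsto d_Q(\nu,q)$ is $\Prb^{\pi\circ X}$-integrable since the paper's standing integrability assumption guarantees $\EE[d_Q(\nu,\pi\circ X)]<\infty$. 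Dominated convergence then reduces the theorem to showing $\ell_{\mu_n}(q)\to\ell_\mu(q)$ for $\Prb^{\pi\circ X}$-almost every $q$, almost surely in $\omega$.

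For the base points I would invoke the classical strong consistency of Fr\'echet means on Heine--Borel metric spaces in the quotient setting (as in \cite{H_meansmeans_12}): uniqueness of $\nu$ together with first-moment integrability of $d_Q(q_0,\pi\circ X)$ forces $\nu_n\to\nu$ almost surely in $Q$. Because $\mu_n\in L_\mu$ by hypothesis, the exact identity $d(\mu_n,\mu)=d_Q(\nu_n,\nu)$ transfers this to $\mu_n\to\mu$ almost surely in $M$.

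Now for the a.e.\ pointwise convergence. Absolute continuity of $X$ with respect to $\vol$, combined with the vanishing Riemannian volume of the singular stratum $M^0$, gives $\Prb\{\pi\circ X\in Q^*\}=1>0$, so Theorem \ref{th:manifold-stability} places $\nu$ in $Q^*$, hence $\mu\in M^*$. Define the exceptional set
$$A_\mu:=\{q\in Q:\mu\in\Cut(\pi^{-1}(q))\cup\pi^{-1}(q)\}.$$
The Riemannian submersion $M^*\to Q^*$ and geodesic convexity of $Q^*$ let horizontal minimizing geodesics in $M^*$ project to minimizing geodesics in $Q^*$, translating ``$\mu\in\Cut(\pi^{-1}(q))$'' into ``$\nu$ is a cut point of $q$ in $Q^*$'' for $q\in Q^*$. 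Consequently $A_\mu\cap Q^*$ sits inside the cut locus of $\nu$ in $Q^*$, which has vanishing Riemannian volume, while $\Prb^{\pi\circ X}(Q\setminus Q^*)=0$ by absolute continuity; therefore $\Prb^{\pi\circ X}(A_\mu)=0$. For $q\notin A_\mu$, closedness of $\Cut(\pi^{-1}(q))$ and $\mu_n\to\mu$ yield $\mu_n\notin\Cut(\pi^{-1}(q))\cup\pi^{-1}(q)$ for all large $n$; by the first assertion of Corollary \ref{cor:lift-smooth} both $\ell_\mu(q)$ and $\ell_{\mu_n}(q)$ are then uniquely pinned down as the first components of $\Phi_q^{-1}(\mu)$ and $\Phi_q^{-1}(\mu_n)$, and continuity of $\Phi_q^{-1}$ at $\mu$, granted by Theorem \ref{th:basu} together with Remark \ref{rm:th-basu}, delivers $\ell_{\mu_n}(q)\to\ell_\mu(q)$.

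The main obstacle is the measure-zero statement for $A_\mu$: one must carefully exploit geodesic convexity of the \emph{incomplete} manifold $Q^*$ and the Riemannian submersion structure on $M^*\to Q^*$ to transfer the cut locus of the orbit $\pi^{-1}(q)$ upstairs into an ordinary cut locus of a point in $Q^*$ downstairs, where the standard vanishing-volume property applies. A secondary subtlety is that Lemma \ref{lem:basic-lifts}(iv) only determines optimal lifts through a Fr\'echet mean up to $\Prb^{\pi\circ X}$-null sets with respect to the population measure, so it does not directly tell us which optimal lift a hypothetical $\ell_{\mu_n}$ is; but on the complement of $A_{\mu_n}$ the first part of Corollary \ref{cor:lift-smooth} forces the optimal lift at $q$ to be a single point, so the conclusion is in fact insensitive to the choice of optimal lifts in the hypothesis.
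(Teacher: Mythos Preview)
Your overall architecture matches the paper's proof: Ziezold's strong law gives $\nu_n\to\nu$ a.s., optimal positioning of $\mu_n$ to $\mu$ transfers this to $\mu_n\to\mu$ in $M$, pointwise a.e.\ convergence of the lifts is established on a set of full $\Prb^{\pi\circ X}$-measure, and the same triangle-inequality bound $d(\ell_{\mu_n}(q),\ell_\mu(q))\le 2d(\mu_n,\mu)+2d_Q(\nu,q)$ drives dominated convergence. The genuine difference lies in how you obtain the pointwise step. The paper builds the sets $F_p$ and $F$ (Definition~\ref{def:CS-F}), proves $F=M^*$ (Theorem~\ref{th:F=Mstar}), and then invokes Lemma~\ref{lem:F_p_advanced}(iii), which is a bare-hands compactness-plus-uniqueness argument: any cluster point of $\ell_{\mu_n}(q)$ lies in $\pi^{-1}(q)$ at optimal distance to $\mu$, hence equals $\ell_\mu(q)$ whenever $q\in\pi(F_\mu)$. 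You instead route through the smoothness machinery of Section~\ref{scn:smooth}, using closedness of $\Cut(\pi^{-1}(q))$ to pull $\mu_n$ into the domain of $\Phi_q^{-1}$ and then continuity of $\Phi_q^{-1}$ at $\mu$.

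Your route is more geometric and exhibits the convergence as a consequence of Corollary~\ref{cor:lift-smooth}, but it forces a slightly larger exceptional set: the paper only needs to discard $Q\setminus\pi(F_\mu)$ (non-unique nearest orbit point), whereas your $A_\mu$ also contains focal-type cut points where the nearest point is still unique. Your reduction of $A_\mu\cap Q^*$ to $\Cut_{Q^*}(\nu)$ does go through---the horizontal minimizing geodesic projects to a minimizing geodesic in $Q^*$ by geodesic convexity, and since $\nu\in Q^*$ is open the projected geodesic persists in $Q^*$ past $t=1$, so $\nu$ is a genuine cut point of $q$ in $Q^*$---and this is essentially the $M^*\subseteq F$ half of Theorem~\ref{th:F=Mstar}. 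The paper's compactness argument buys simplicity: it never needs $\mu_n\notin\Cut(\pi^{-1}(q))$, closedness of the orbit cut locus, or any cut-locus correspondence between $M$ and $Q^*$; uniqueness at the limit $\mu$ alone suffices.
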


\begin{proof} Since $\Prb^X$ is absolutely continuous w.r.t. to $\vol$, we have $0<\Prb\{X \in M^*\} \leq \Prb\{\pi\circ X \in Q^*\}$ and hence by manifold stability (Theorem \ref{th:manifold-stability}) that $\nu \in Q^*$, i.e. $\mu \in M^* \subseteq F$ ($F$ is introduced below in Definition \ref{def:CS-F} and only needed to apply Lemma \ref{lem:F_p_advanced} below), due to Theorem \ref{th:F=Mstar}, and similarly $\nu_n \in Q^*\as$, i.e. $\mu_n \in M^* \subseteq F\as$. 
Then, by (ii) and (iii) of  Lemma \ref{lem:F_p_advanced},
\begin{eqnarray}\label{eq:proof1-SL-optimal-lift}
 d(\ell_{\mu_n}(q),\ell_\mu(q)) \to 0 &\mbox{ for all }&q\in Q\quad \Prb^{\pi \circ X}\mbox{-a.e.}
\end{eqnarray} 
holds a.s., since by Ziezold's strong law for Fr\'echet means on metric spaces \citep{Z77}, $\nu_n\to  \nu$ a.s. and hence $d(\mu_n,\mu) = d_Q(\nu_n,\nu)\to 0\as$.

Moreover, observe
\begin{eqnarray*}
 d(\ell_{\mu_n}(q),\ell_\mu(q)) &\leq& d(\ell_{\mu_n}(q),\mu_n) + d(\mu_n,\mu)+ d(\mu,\ell_\mu(q))\\
 &=& d_Q(q,\nu_n) + d(\mu_n,\mu)+ d_Q(\nu,q)\\
 &\leq& 2d(\mu_n,\mu)+ 2d_Q(\nu,q)\,.
\end{eqnarray*}
Since for fixed $\mu_n$, the bottom row as a function of $q$ is $\Prb^{\pi \circ X}$ integrable (by hypothesis a Fr\'echet mean exists), by dominated convergence, from  (\ref{eq:proof1-SL-optimal-lift}) infer that indeed
$$ \int_Q d(\ell_{\mu_n}(q),\ell_\mu(q))\, d\Prb^{\pi \circ X}(q) \to 0 \as\,.$$
\end{proof}

\begin{Def}\label{def:CS-F}
 For closed  $S \subset M$ define
 $$C_S:= \{p \in M: \mbox{ there are } S\ni s_1\neq s_2 \in S\st d(p,s_1)= d(p,S)=d(p,s_2)\}\,.$$
 Further, we set for $p \in M$,
 \begin{eqnarray*}
  F_p&:=&\bigcup_{p' \in L'_p}[p'] ~=~\{p' \in M:  \mbox{ there is a uniuqe }s'\in [p']\mbox{ with }d(p,s)=d(p,[p'])\}\\
  F&:=&\{p \in M: \vol(M\setminus F_p)=0\}
 \end{eqnarray*}
\end{Def}

\begin{Lem}\label{lem:foci-closed-set}
 Let $S \subset M$, then $\vol(C_S) = 0$.
\end{Lem}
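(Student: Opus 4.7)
The plan is to identify $C_S$ as a subset of the non-differentiability set of the distance function $f : M \to [0,\infty)$ defined by $f(p) := d(p, S)$, and then appeal to Rademacher's theorem. Since $f$ is $1$-Lipschitz by the triangle inequality, Rademacher's theorem applied in any smooth atlas of $M$ (where the Riemannian distance is locally bi-Lipschitz to the Euclidean one) shows that the set $N$ of points where $f$ is not differentiable has $\vol(N) = 0$. The task therefore reduces to showing $C_S \subseteq N$.

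Fix $p \in C_S$ and pick $s_1 \neq s_2$ in $S$ with $d(p, s_i) = d(p, S) =: r$. Then $r > 0$, since $d(p, s_1) = d(p, s_2) = 0$ would force $s_1 = p = s_2$. Let $\gamma_i : [0, r] \to M$ be unit-speed minimizing geodesics from $p$ to $s_i$, $i = 1, 2$. By uniqueness of the geodesic flow from given initial data, $\gamma_1'(0) \neq \gamma_2'(0)$, for otherwise $\gamma_1 \equiv \gamma_2$ and hence $s_1 = s_2$. The standard first variation of arc length, applied to the variation of the initial endpoint $p$ along $t \mapsto \exp_p(tw)$, yields, for every unit $w \in T_p M$,
\[
 \limsup_{t \to 0^+} \frac{d(\exp_p(tw), s_i) - r}{t} \;\leq\; -\langle \gamma_i'(0), w \rangle, \qquad i = 1, 2.
\]
From $f \leq d(\cdot, s_i)$ everywhere, with equality at $p$, the two inequalities combine to
\[
 \limsup_{t \to 0^+} \frac{f(\exp_p(tw)) - f(p)}{t} \;\leq\; -\max_{i \in \{1,2\}} \langle \gamma_i'(0), w \rangle
\]
for every unit $w$. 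If $f$ were differentiable at $p$ with Riemannian gradient $v \in T_p M$, the left-hand side would equal $\langle v, w\rangle$, so $\langle v + \gamma_i'(0), w\rangle \leq 0$ would hold for every unit $w$ and both $i$. This forces $v = -\gamma_1'(0) = -\gamma_2'(0)$, contradicting $\gamma_1'(0) \neq \gamma_2'(0)$. Hence $p \in N$, and $\vol(C_S) \leq \vol(N) = 0$.

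The delicate point is the first variation inequality above, which must hold even when $s_i \in \Cut(p)$, so that $d(\cdot, s_i)$ need not be smooth at $p$. The one-sided directional derivative is nevertheless bounded from above by the contribution of the single minimizing geodesic $\gamma_i$, obtained by comparing $d(\exp_p(tw), s_i)$ to the length of a perturbation of $\gamma_i$ with shifted initial point; this is a classical fact in Riemannian geometry and does not require uniqueness of the minimizer. Granted this, the remaining ingredients—Rademacher's theorem on $M$, positivity of $r$, and the final linear-algebra step—are routine.
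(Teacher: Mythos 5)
Your proof is correct and follows the same skeleton as the paper's: $f=d(\cdot,S)$ is $1$-Lipschitz, Rademacher's theorem in charts (with $\vol$ locally comparable to Lebesgue measure) gives a $\vol$-null non-differentiability set $N$, and one shows $C_S\subseteq N$. The difference lies in how you establish that last inclusion. The paper argues more elementarily: by a triangle-inequality squeeze, $f(\gamma_i(t))=f(p)-t$ holds \emph{exactly} for $0\le t\le f(p)$ along each minimizing geodesic $\gamma_i$ from $p$ to $s_i$, so differentiability at $p$ would give $\langle\grad_p f,\gamma_i'(0)\rangle=-1$ for $i=1,2$; combined with $\|\grad_p f\|\le 1$ (from the Lipschitz bound), the equality case of Cauchy--Schwarz forces $\grad_p f=-\gamma_1'(0)=-\gamma_2'(0)$, a contradiction. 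You instead use the first-variation upper bound $\limsup_{t\to0^+}\bigl(d(\exp_p(tw),s_i)-r\bigr)/t\le-\langle\gamma_i'(0),w\rangle$ in \emph{all} directions $w$, which lets you dispense with the gradient norm bound but rests on a semiconcavity-type fact about distance functions that you only assert (it is indeed true even when $s_i\in\Cut(p)$, e.g.\ by comparing with $d(\exp_p(tw),\gamma_i(t'))+(r-t')$ for small fixed $t'$, where $d(\cdot,\gamma_i(t'))$ is smooth near $p$); the paper's squeeze avoids needing this input altogether. Two points you leave implicit and should state: completeness of $M$ (Hopf--Rinow) is what guarantees the minimizing geodesics $\gamma_i$ exist at all, and this is precisely where the counterexample of Remark \ref{rm:foci-closed-set} breaks; and the atlas must be countable so that the chartwise Lebesgue-null sets combine into a $\vol$-null set. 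Both are covered by the paper's standing assumptions, so these are presentational rather than substantive gaps.
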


\begin{proof}
Claim I: $f:M \to [0,\infty)$, $p\mapsto d(p,S)$ is $1$-Lipschitz.

Indeed, for $p_1,p_2\in M$ with $s_1,s_2\in S$ such that $d(p_i,S) = d(p_i,s_i)$, $i=1,2$, we have by the triangle inequality,
\begin{eqnarray*}
  f(p_1) - f(p_2) &=& d(p_1,s_1)-d(p_2,s_2)~\leq~d(p_1,s_2)-d(p_2,s_2)~\leq~d(p_1,p_2)\\
 -\left( f(p_1) - f(p_2)\right) &=& d(p_2,s_2)-d(p_1,s_1)~\leq~d(p_2,s_1)-d(p_1,s_1)~\leq~d(p_1,p_2)\,,
\end{eqnarray*}
yielding Claim I.

Claim II: $\grad_p f$ exists $\vol$-a.e. for $p$ with $\|\grad_p f\|\leq 1$ there.

In every local chart $(U,\exp^{-1}_{p})$, the Riemannian volume pushes forward to a measure absolutely continuous with respect to Lebesgue measure in $\exp^{-1}_{p}(U) \subseteq T_pM$. Making $U$ small enough, the Euclidean distance is bounded by a multiple of the intrinsic distance of preimages. Hence the intrinsic distance in $U$ to $U\cap S$ is Lipschitz in terms of Euclidean distance, and thus by Rademacher's theorem (e.g. \citet[Theorem 3.1]{heinonen2005lectures}) the subset in $\exp^{-1}_{p}(U)$ where $x \mapsto d(\exp_p x,S)$ is not differentiable, has Lebesgue measure zero. In consequence, $\vol(S_U)=0$ for the subset $S_U$ in $U\cap S$ where $p'\mapsto d(p',S)$ is not differentiable. Thus the first assertion of Claim II follows since $M$ has a countable atlas.

For the second assertion, consider $p\in M$ where $\grad_p f$ is defined, and arbitrary $v\in T_pM$. Then, due to Claim I,
\begin{eqnarray*}
 |\langle \grad_pf,v\rangle| &=& \left| \frac{d}{dt} f(\exp_p(tv))\right| ~=~ \lim_{t\to 0}\frac{1}{t}\left|f(\exp_p(tv)) - f(p)\right|\\
 &\leq &  \lim_{t\to 0}\frac{1}{t} d(\exp_p(tv),p) ~=~\|v\|\,,
\end{eqnarray*}
yielding the second assertion.

Claim III: If $p\in C_S$ then $f$ is not differentiable at $p$.

For otherwise, if $f$ was differentiable at $p \in  C_S$, there would be $S\ni s_1\neq s_2 \in S$ with $d(s_i,p) = f(p)$ for $i=1,2$. In particular, since $M$ is complete, 
there would be two different unit speed geodesics $\gamma_i: t\mapsto \exp_p (tv_i)$ with $T_pM \ni v_1 \neq v_2\in T_pM$, $\|v_1\| = 1 = \|v_2\|$ and $s_i = \exp_p(f(p)v_i)$ for $i=1,2$. Thus, in particular, 
$$f(\gamma_i(t)) = d(\gamma_i(t),s_i) = f(p) - t,\mbox{ for all }0 \leq t\leq f(p)$$
In consequence
$$ - 1 = \frac{d}{dt}\Big|_{t=0}f(\gamma_i(t)) = \langle \grad_p f,v_i\rangle,\mbox{ for }i=1,2\,,$$
which, in conjunction with the second part of Claim II ($\|\grad_p f\|\leq 1$), yields that $\grad_p f = -v_i$ for $i=1,2$, a contradiction.

Hence, due to Claim III, $C_S \subseteq \{p \in M: f \mbox{ is not differentiable at }p\}$, where the latter has zero volume, due to the first part of Claim II. Thus $\vol(C_S)=0$ as asserted.
\end{proof}

\begin{Rm}\label{rm:foci-closed-set} Note that the assertion of Lemma \ref{lem:foci-closed-set} can be wrong if $M$ ceases to be complete (completeness was essential for proving Claim III above). For instance, with $M=\RR^2\setminus \{(x,0)^T: x>0\}$ and $S = \{(\cos\phi,\sin\phi)^T: \phi \in [\pi/4,\pi/2]\}$ verify that $C_S = \{(x,y)^T\in M: x\geq 0  \geq y\}$ which has infinite volume, see Figure \ref{fig:foci-closed-set}.
\end{Rm}

\begin{figure}
    \centering
    \includegraphics[width=0.3\linewidth]{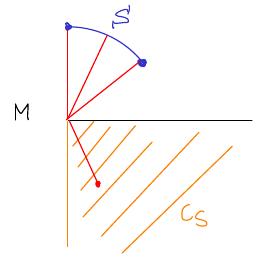}
    \caption{\it Depicting the counterexample from Remark \ref{rm:foci-closed-set}: All points on the circular segment $S$ are closest to any point in the lower right quadrant which is  $C_S$ in the incomplete $M$ which is the plane minus a slit along the positive first axis.}
    \label{fig:foci-closed-set}
\end{figure}

\begin{Lem}\label{lem:F_p_basics}
 The following hold:
 \begin{enumerate}[label=(\roman*)]
  \item Let $p,p' \in M$, then
  $ p' \in F_p \Leftrightarrow p \in M\setminus C_{[p']}$,
  \item $\pi^{-1}\circ\pi (F_p) = F_p$ for all $p\in M$,
  \item $\vol(M\setminus F_p) =0$ for $p\in M$ $\vol$-a.e.,
  \item $\vol(M\setminus F) =0$.
 \end{enumerate}
\end{Lem}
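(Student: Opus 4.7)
The plan is to handle (i) and (ii) first as direct translations of the definitions, derive (iii) from a Fubini/Tonelli argument based on Lemma~\ref{lem:foci-closed-set}, and then obtain (iv) at once from (iii). For (i), since $G$ acts properly and $[p']$ is a closed embedded submanifold of $M$, the distance $d(p,[p'])$ is always attained in $[p']$. Hence the defining condition of $F_p$, that there be a \emph{unique} $s'\in[p']$ realizing this distance, is exactly the negation of the defining condition of $C_{[p']}$, that there exist two distinct $s_1\neq s_2\in[p']$ realizing it; this gives $p'\in F_p\Leftrightarrow p\in M\setminus C_{[p']}$. For (ii), the first representation $F_p=\bigcup_{p'\in L'_p}[p']$ exhibits $F_p$ as a union of $G$-orbits, so $\pi^{-1}\circ\pi(F_p)=F_p$ trivially.

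For (iii), I set $A:=\{(p,p')\in M\times M: p'\in M\setminus F_p\}$. By (i),
$$A=\{(p,p')\in M\times M: p\in C_{[p']}\}.$$
For each fixed $p'$, the orbit $[p']$ is closed, so Lemma~\ref{lem:foci-closed-set} yields $\vol(C_{[p']})=0$. Since the countable atlas makes $\vol$ $\sigma$-finite on $M$, Tonelli gives
$$\int_M\vol(M\setminus F_p)\,d\vol(p)=\int_M\vol(C_{[p']})\,d\vol(p')=0,$$
forcing $\vol(M\setminus F_p)=0$ for $\vol$-a.e.\ $p$. Then (iv) is immediate, since by definition $M\setminus F=\{p\in M:\vol(M\setminus F_p)>0\}$ is precisely the $\vol$-null exceptional set produced by (iii).

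The main obstacle I anticipate is the measurability bookkeeping underlying Tonelli, namely that $A$ be $(\vol\otimes\vol)$-measurable and that $p\mapsto\vol(M\setminus F_p)$ be measurable. I would extract these from the joint continuity of $(p,p')\mapsto d(p,p')-d_Q(\pi(p),\pi(p'))$ whose zero set is the closed subset $\{(p,p'):p'\in L_p\}$, combined with the $G$-saturation from (ii). As a fallback, the proof of Lemma~\ref{lem:foci-closed-set} in fact places $C_{[p']}$ inside a $\vol$-null Borel set (the non-differentiability locus of a $1$-Lipschitz distance function), so one may always pass to the completion of $\vol\otimes\vol$ without loss.
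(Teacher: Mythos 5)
Your proposal is correct and follows essentially the same route as the paper: (i) and (ii) read off from the definitions (with (ii) via the orbit-union representation of $F_p$), (iii) by the Fubini/Tonelli identity $\int_M\vol(M\setminus F_p)\,d\vol(p)=\int_M\vol(C_{[p']})\,d\vol(p')=0$ using Lemma \ref{lem:foci-closed-set}, and (iv) as the immediate consequence. Your added measurability bookkeeping (joint measurability of the exceptional set, or domination by the non-differentiability locus of the Lipschitz distance and passing to the completion) is extra care that the paper's proof simply elides with ``by Fubini and (i)''.
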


\begin{proof} (i) follows directly from definition.
 
 (ii): The inclusion $F_p \subseteq \pi^{-1}\circ\pi (F_p)$. To see the reverse, let $p'\in \pi^{-1}\circ\pi (F_p)$. Then $\pi(p') \in \pi(F_p)$, and hence, there are $g,g'\in G$ with $g'.p' \in g.F_p$, i.e. $g^{-1}g'.p' \in F_p$. Hence, there is a unique $p'' \in [p']$ such that $p''$ is in optimal position to $p$, hence $p' \in F_p$, yielding the reverse inclusion  $\pi^{-1}\circ\pi (F_p)\subseteq F_p $ and thus equality.
 
 (iii): Since $[p']\subset M$ is closed, $\vol(C_{[p']})=0$ by Lemma \ref{lem:foci-closed-set} for all $p'\in M$. Hence,
 $$ 0 = \int_M\left(\int_{C_{[p']}} d\vol(p)\right) d\vol(p') = \int_M \left(\int_{M\setminus F_p} d\vol(p')\right) d\vol(p)$$
 by Fubini and (i), yielding $\vol(M\setminus F_p) =0$ for $p\in M$ $\vol$-a.e. which is (iii).
 
 (iv) follows in consequence of the above, as,
 $$ 0 = \vol\{p \in M: \vol(M\setminus F_p) \neq 0\} = \vol(M\setminus F)\,.$$

\end{proof}

\begin{Lem}\label{lem:F_p_advanced}
Let $\vol_\sharp = \vol \circ \pi^{-1}$ be the pushforward of the Riemannian volume on $M$ to $Q$. Then
 \begin{enumerate}[label=(\roman*)]
  \item $\vol_\sharp(Q\setminus \pi(F_p)) =0$  for all $p\in F$.
  \end{enumerate}
 Further assuming that $F\ni p_n \to p \in F$ and setting $\tilde Q:= \pi(F_p) \cap \bigcap_{n=1}^\infty  \pi(F_{p_n})$, we have 
 \begin{enumerate}[label=(\roman*)] \setcounter{enumi}{1}
  \item $\vol_\sharp(Q\setminus \tilde Q) =0$, 
\item $\ell_{p_n}(q) \to \ell_p(q)$ for all $q\in \tilde Q$; here $\ell_{p_n}$ and $\ell_p$ are arbitrary optimal lifts through $p_n$ and $p$, respectively.
  \end{enumerate}
\end{Lem}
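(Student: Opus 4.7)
The plan is to dispatch (i) and (ii) briefly and then concentrate on (iii), which is the only substantive part. For (i), I would use $p \in F$, i.e.\ $\vol(M \setminus F_p) = 0$, together with $\pi^{-1}(\pi(F_p)) = F_p$ from Lemma~\ref{lem:F_p_basics}(ii), so that $\pi^{-1}(Q \setminus \pi(F_p)) = M \setminus F_p$ and hence
$$\vol_\sharp(Q \setminus \pi(F_p)) = \vol(M \setminus F_p) = 0.$$
For (ii), since $p$ and every $p_n$ lie in $F$, combining (i) for each of these with the decomposition
$$Q \setminus \tilde Q = (Q \setminus \pi(F_p)) \cup \bigcup_{n=1}^\infty (Q \setminus \pi(F_{p_n}))$$
exhibits $Q \setminus \tilde Q$ as a countable union of $\vol_\sharp$-null sets.

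For (iii), I would fix $q \in \tilde Q$ and first pin down the values of the lifts at $q$. Because $q \in \pi(F_p)$, the orbit $\pi^{-1}(q)$ contains a unique point realizing $d_Q(\pi(p),q) = \min_{p' \in \pi^{-1}(q)} d(p,p')$, and since $\ell_p(q) \in L_p \cap \pi^{-1}(q)$ does realize this distance, it must be that unique minimizer. The analogous argument using $q \in \pi(F_{p_n})$ forces $\ell_{p_n}(q)$ to be the unique point of $\pi^{-1}(q)$ closest to $p_n$. So although the selections $\ell_p$ and $\ell_{p_n}$ are a priori non-unique, their values on $\tilde Q$ are intrinsic and this is what makes the statement independent of the choices.

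Next I would verify that $\{\ell_{p_n}(q)\}$ is bounded in $M$ via
$$d(p,\ell_{p_n}(q)) \leq d(p,p_n) + d(p_n,\ell_{p_n}(q)) = d(p,p_n) + d_Q(\pi(p_n),q),$$
the right-hand side being bounded since $p_n \to p$ and $d_Q$ is continuous. Completeness of $M$ and Hopf--Rinow then render the sequence precompact, so any subsequence admits a further subsequence $\ell_{p_{n_k}}(q) \to y$ with $y \in \pi^{-1}(q)$ (the orbit is closed by the setup). Continuity of $d$ yields
$$d(p,y) = \lim_k d(p_{n_k},\ell_{p_{n_k}}(q)) = \lim_k d_Q(\pi(p_{n_k}),q) = d_Q(\pi(p),q) = d(p,\ell_p(q)),$$
and uniqueness at $q \in \pi(F_p)$ forces $y = \ell_p(q)$. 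Since every subsequence admits a further subsequence converging to the same limit $\ell_p(q)$, the full sequence converges to $\ell_p(q)$.

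The hard part is entirely (iii), and it reduces to the interplay of two ingredients: uniqueness of the closest point in each relevant orbit (guaranteed by the $\tilde Q$-hypothesis via Lemma~\ref{lem:F_p_basics}(ii)) and Hopf--Rinow compactness, which together promote cluster points of $\ell_{p_n}(q)$ into a single genuine limit $\ell_p(q)$.
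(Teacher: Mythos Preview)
Your proposal is correct and follows essentially the same route as the paper: parts (i) and (ii) are identical, and for (iii) both arguments bound the sequence $\ell_{p_n}(q)$ (you via $d(p,\ell_{p_n}(q))$, the paper via $d(\ell_{p_n}(q),\ell_p(q))$), invoke Heine--Borel/Hopf--Rinow to extract cluster points in the closed orbit $\pi^{-1}(q)$, and then use continuity of $d$ together with the uniqueness coming from $q\in\pi(F_p)$ to force every cluster point to equal $\ell_p(q)$. Your explicit remark that the values $\ell_p(q),\ell_{p_n}(q)$ are intrinsically determined on $\tilde Q$ is a welcome clarification the paper leaves implicit.
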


\begin{proof} (i): From the definition and (ii) of Lemma \ref{lem:F_p_basics} we have
 $$ vol_\sharp(Q\setminus \pi(F_p)) = \vol(M \setminus F_p)\,.$$
 where, by definition, the above r.h.s. is zero for all $p\in F$.
 
 (ii): By de-Morgan's law:
  $$ vol_\sharp\left(Q\setminus \tilde Q\right) = \vol_\sharp\left(\big(Q\setminus \pi(F_p)\big) \cup \bigcup_{n=1}^\infty  \big(Q\setminus \pi(F_{p_n})\big)\right)\,, $$
  whose r.h.s. is zero by (i) since $p, p_n\in F$ for all $n\in \NN$.
  
  (iii): Since
  \begin{eqnarray*}
   d(\ell_{p_n}(q),\ell_p(q)) &\leq&  d(\ell_{p_n}(q),p_n) + d(p_n,p) + d(p,\ell_p(q))\\
   &=& d(\pi^{-1}(q),p_n) + d(p_n,p) + d(p,\pi^{-1}(q))\\
   &\to&2d(p,\pi^{-1}(q))\,,
  \end{eqnarray*}
   observe that $\ell_{p_n}(q)$ is bounded. Since $M$ is Heine-Borel (by hypothesis), $\ell_{p_n}(q)$ has a cluster point $p' \in \pi^{-1}(q)$. Now suppose that $p'' \in \pi^{-1}(q)$ is another cluster point of $\ell_{p_n}(q)$.  Since $d(p_n, \ell_{p_n}(q)) = d(p_n,\pi^{-1}(q)) \to d(p,\pi^{-1}(q))$ we have thus
   $$ d(p,p') = d(p,\pi^{-1}(q)) = d(p,p'')$$
   and from $q \in \tilde Q$ we know that $\pi^{-1}(q)$ has a unique point $\ell_p(q) =p'=p''$ in optimal position to $p$, hence, indeed
   $$ \ell_{p_n}(q) \to \ell_p(q)\,.$$
\end{proof}

\begin{Th}\label{th:F=Mstar} $F= M^*$.\end{Th}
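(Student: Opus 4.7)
The plan is to establish the set equality $F = M^*$ via two inclusions, each relying on a different ingredient from the preceding development.

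For the inclusion $F \subseteq M^*$, I would argue the contrapositive: if $p \in M^0$, then $p \notin F$. Choose some $e \neq g \in I_p$, which exists since $p$ has non-trivial isotropy. For any $p' \in M^*$, pick $s \in [p']$ achieving the minimum of $d(p,\cdot)$ on $[p']$ (which exists by properness of the action). Since $g^{-1} \in I_p$ acts by isometries,
$$d(p, g.s) = d(g^{-1}.p, s) = d(p, s) = d(p, [p']),$$
so $g.s \in [p']$ is also a closest point to $p$. Moreover $g.s \neq s$: isotropy groups along an orbit are conjugate, so $I_s$ is conjugate to $I_{p'} = \{e\}$ and hence trivial, whence $g \notin I_s$. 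Thus $[p']$ contains two distinct closest points to $p$, so $p' \notin F_p$. This shows $M^* \subseteq M \setminus F_p$, and since $M^*$ is a non-empty open subset of $M$ (hence of positive Riemannian volume), $\vol(M \setminus F_p) \geq \vol(M^*) > 0$, proving $p \notin F$.

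For the reverse inclusion $M^* \subseteq F$, given $p \in M^*$, I will prove the stronger containment $M \setminus F_p \subseteq C_{[p]}$ and then invoke Lemma \ref{lem:foci-closed-set} on the closed orbit $S = [p]$ to conclude $\vol(M \setminus F_p) \leq \vol(C_{[p]}) = 0$. To establish the containment, let $p' \in M \setminus F_p$. By Lemma \ref{lem:F_p_basics}(i), $p \in C_{[p']}$, so there exist $s_1 \neq s_2 \in [p']$ with $d(p, s_1) = d(p, [p']) = d(p, s_2)$. Writing $s_2 = g.s_1$, we have $g \notin I_{s_1}$, in particular $g \neq e$. Then
$$d(g^{-1}.p, s_1) = d(p, g.s_1) = d(p, s_1),$$
while the symmetry of optimal positioning gives $d(p, s_1) = d(s_1, [p])$, so both $p$ and $g^{-1}.p$ minimize the distance from $s_1$ to $[p]$. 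Since $I_p = \{e\}$ and $g \neq e$ imply $g^{-1}.p \neq p$, we obtain $s_1 \in C_{[p]}$. Finally, $C_{[p]}$ is $G$-invariant (because $G$ acts by isometries and stabilizes $[p]$ setwise), so $p' \in [s_1] \subseteq C_{[p]}$, as desired.

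The main obstacle lies in the second inclusion: I must convert non-uniqueness of closest points on $[p']$ (seen from $p$) into non-uniqueness of closest points on $[p]$ (seen from some $s_1 \in [p']$), so that Lemma \ref{lem:foci-closed-set} can be invoked on the \emph{other} orbit. This exchange is powered by the symmetry of optimal positioning together with the freeness of the action at $p$, i.e.\ $I_p = \{e\}$, which is precisely where the hypothesis $p \in M^*$ enters essentially.
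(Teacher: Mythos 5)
Your proof is correct. For the inclusion $F\subseteq M^*$ you use the same idea as the paper --- a nontrivial $g\in I_p$ doubles optimally positioned points on orbits through the free part, and $\vol(M^*)>0$ finishes --- except that you apply it to every $p'\in M^*$ directly, while the paper restricts to an open $U\subset M^*\cap\big(M\setminus\Cut(\pi^{-1}(q))\big)$ and invokes the normal exponential $\Phi_q$; your version is leaner, since existence of a closest point on $[p']$ needs only closedness of orbits and completeness of $M$. The genuine divergence is in $M^*\subseteq F$. The paper passes to the quotient: two distinct points of $[p']$ in optimal position to $p$ give two horizontal geodesics, which project to two minimizing geodesics in $Q$ from $q=\pi(p)$ to $q'=\pi(p')$, so either $q'\in Q^0$ or $q'\in\Cut(q)$ in $Q^*$, and both exceptional sets pull back to $\vol$-null sets (using finiteness of singular orbits in tubes and the measure-zero cut locus of $Q^*$). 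You instead stay entirely in $M$: via symmetry of optimal positioning and $I_p=\{e\}$ you convert non-uniqueness on $[p']$ seen from $p$ into non-uniqueness on $[p]$ seen from $s_1$, obtaining $M\setminus F_p\subseteq C_{[p]}$ (after the easy observation that $C_{[p]}$ is $G$-invariant), and Lemma \ref{lem:foci-closed-set} applied to the closed orbit $S=[p]$ concludes. Your route is more elementary and self-contained --- it needs neither Theorem \ref{th:basu}, nor the Riemannian structure of $Q^*$, nor the cited measure-zero results --- whereas the paper's argument yields the extra geometric information $M\setminus F_p\subseteq M^0\cup\pi^{-1}(\Cut(q))$, tying $F_p$ to the quotient cut locus that reappears elsewhere in the paper. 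Either argument proves the theorem.
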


\begin{proof}
 In order to see $F \subseteq M^*$ we show $p\in M^0 \Rightarrow p \not \in F$. To this end, let $p \in M^0$ and set $q=\pi(p)$. Then there is $e\neq g \in G$ such that $g.p=p$. Using the notation from Section \ref{scn:smooth}, since $M^*$ is open and dense, there is an open set $U \subset M^*\cap \big(M\setminus \Cut(\pi^{-1}(q))\big)$. Then for every $p' \in U$ there is $g'\in G$ such that the first component of $\Phi^{-1}_q(p')$ ($\Phi_q$ maps from the tangent bundle into the manifold, cf. (\ref{eq:Phiq}) is $(g')^{-1}.p$, i.e. $g'.p'$ is in optimal position to $p$. However, also $gg'.p' \neq g'.p'$ (since $p'\in M^*$) is in optimal position to $p$, yielding $U\cap F_p = \emptyset$. Since $\vol(U) > 0$, thus $p\not\in F$.
 
 $M^* \subseteq F$: Let $p \in M^*$ with $\pi(p) =:q \in Q^*$. If $p' \in M\setminus F_p$ is in optimal position to $p$ then there is also $p'\neq p'' \in \pi^{-1}(q')$,  with $q' := \pi(p') \in Q$, in optimal position to $p$ and hence $v',v'' \in H_pM$ with $v'\neq v''$ such that $p' = \exp_p v'$ and $p'' = \exp_p v''$ as shortest geodesics between points in optimal position are horizontal \citep[p 10/11]{HHM07}. In consequence, since $p\in M^*$, these project to  two different geodesics 
 in $Q$ from $q$ to $q'$. Hence either $q'\in Q^0$, i.e. $p' \in M^0$ which has zero volume (due to \citet[Proposition 2.7.1]{duistermaat2012lie} every tube in $M$ meets only finitely many singular orbits, and $M$, as a manifold, has a countable atlas) or $q' \in \Cut(q)$ (with respect to the manifold $Q^*$) which, as $\vol_\sharp$ is the Riemannian volume for $Q^*$, has $\vol_\sharp(\Cut(q))=0$ (see \citet[p. 123]{chavel1995riemannian} and \citet{ItohTanaka2001}). In consequence $p' \in \pi^{-1}(\Cut(q))$ with $\vol\big(\pi^{-1}(\Cut(q))\big) = \vol_\sharp(\Cut(q))=0$. 
 In summary $\vol(M\setminus F_p) =0$ yielding $p\in F$.
\end{proof}

In consequence of the above Theorem, we have the following extension of (vi) of Lemma \ref{lem:basic-lifts}.

\begin{Cor}\label{cor:improve-lem-basic-lift}
     Let $X$ be absolutely continuous w.r.t. $\vol$ and with unique Fr\'echet mean $\{\nu\} = \argmin_{q\in Q}  F^{\pi\circ X}(q)$. If $\Prb\{X \in Q^*\} >0$ and if $\mu \in \pi^{-1}(\nu)$ with optimal lift $\ell_\mu$ through $\mu$, then
    $$\{\mu\} = \argmin_{p\in M}  F^{\ell_\mu \circ \pi\circ X}(p)\,.$$
\end{Cor}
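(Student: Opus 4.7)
The plan is to show that the hypothesis $\Prb\{X\in L'_\mu\}>0$ of Lemma \ref{lem:basic-lifts}(vi), which is not directly available here, can be upgraded to $\Prb\{\ell_\mu\circ\pi\circ X\in L'_\mu\}=1$, and that this upgrade suffices to drive the isotropy argument of (vi) to a uniqueness conclusion without modifying the lift.

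First, since $X$ is $\vol$-absolutely continuous and $M^0$ has zero volume (as recorded inside the proof of Theorem \ref{th:F=Mstar}), $\Prb\{X\in M^*\}=1$ and in particular $\Prb\{\pi\circ X\in Q^*\}>0$. Manifold Stability (Theorem \ref{th:manifold-stability}) then forces $\nu\in Q^*$, so $\mu\in M^*=F$ by Theorem \ref{th:F=Mstar}. Hence $\vol(M\setminus F_\mu)=0$, and absolute continuity gives $\Prb\{X\in F_\mu\}=1$.

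The key geometric observation is that for every $q\in\pi(F_\mu)=\pi(L'_\mu)$ the fibre $L_\mu\cap\pi^{-1}(q)$ is a singleton, namely the unique element of $L'_\mu\cap\pi^{-1}(q)$: if $p_0\in L_\mu\cap\pi^{-1}(q)$ and $p'\in L'_\mu\cap\pi^{-1}(q)$, then $p_0,p'\in[p']\cap L_\mu$ and the defining property of $L'_\mu$ forces $p_0=p'$. Consequently, any optimal lift $\ell_\mu$ through $\mu$ automatically satisfies $\ell_\mu(q)\in L'_\mu$ for every $q\in\pi(F_\mu)$, so $\ell_\mu\circ\pi\circ X\in L'_\mu\cap M^*$ almost surely (the $M^*$-part following from $\pi(X)\in Q^*$ a.s.\ together with $\pi^{-1}(Q^*)=M^*$).

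I then run the argument of Lemma \ref{lem:basic-lifts}(vi) unchanged. Part (ii) of that lemma gives that $\mu$ is a minimizer; if $\mu'$ is another, then by (iii) there is $g\in G$ with $\mu'=g.\mu$, whence $F^{\ell_\mu\circ\pi\circ X}(\mu)=F^{\ell_\mu\circ\pi\circ X}(g.\mu)$. Combined with the pointwise inequality $d(y,\mu)\leq d(y,g.\mu)$ for all $y\in\ell_\mu(Q)\subseteq L_\mu$, this forces $d(y,\mu)=d(y,g.\mu)$ for $\Prb^{\ell_\mu\circ\pi\circ X}$-a.e.\ $y$; by the previous paragraph such a $y$ can be picked in $L'_\mu\cap M^*$. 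Then $d(g^{-1}.y,\mu)=d(y,g.\mu)=d(y,\mu)$ exhibits $g^{-1}.y\in[y]\cap L_\mu$, so uniqueness in $L'_\mu$ yields $g^{-1}.y=y$, and trivial isotropy at $y\in M^*$ forces $g=e$ and $\mu'=\mu$.

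The main obstacle is pinpointing the effect of absolute continuity: it only directly delivers $\Prb\{X\in F_\mu\}=1$, and Lemma \ref{lem:basic-lifts}(vi) cannot be invoked off the shelf because $X$ need not hit $L'_\mu$ at all. The identity $F=M^*$ from Theorem \ref{th:F=Mstar}, together with the singleton structure of $L_\mu\cap\pi^{-1}(q)$ over $\pi(F_\mu)$, is precisely what transfers this full-measure set from $X$ to the lifted variable $\ell_\mu\circ\pi\circ X$, after which the original isotropy argument closes.
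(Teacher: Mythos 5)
Your proof is correct, and it follows the same skeleton as the paper's (manifold stability gives $\nu\in Q^*$, hence $\mu\in M^*=F$ by Theorem \ref{th:F=Mstar}, and then the isotropy argument of Lemma \ref{lem:basic-lifts}), but it deviates exactly where the published proof is loose --- and your deviation is the right one. The paper argues that $\mu\in F$ yields $\vol\{p\in M:\ p\notin L'_\mu\}=0$ and hence $\Prb\{X\in L'_\mu\}>0$, so that (vi) of Lemma \ref{lem:basic-lifts} applies verbatim; however, membership in $F$ only gives $\vol(M\setminus F_\mu)=0$, and $L'_\mu$ is in general a $\vol$-null subset of $F_\mu$ when $\dim G>0$ (e.g.\ a half-plane in the setting of Example \ref{ex:1}), so for absolutely continuous $X$ one typically has $\Prb\{X\in L'_\mu\}=0$. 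You avoid this by noting that for $q\in\pi(F_\mu)$ the fibre $L_\mu\cap\pi^{-1}(q)$ is the singleton supplied by $L'_\mu$, hence $\ell_\mu\circ\pi\circ X\in L'_\mu\cap M^*$ a.s., and then rerunning the proof of (vi) under this hypothesis --- which is in fact all that proof uses, namely $\Prb^{\ell_\mu\circ\pi\circ X}(L'_\mu)>0$ rather than $\Prb\{X\in L'_\mu\}>0$. As a side benefit, insisting on a witness in $L'_\mu\cap M^*$ makes the final step $g^{-1}.y=y\Rightarrow g=e$ rest explicitly on trivial isotropy, a point left implicit in (vi); and your derivation of $\Prb\{\pi\circ X\in Q^*\}>0$ directly from absolute continuity sidesteps the statement's imprecise hypothesis ``$\Prb\{X\in Q^*\}>0$''. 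In short: same strategy, but your version repairs the measure-theoretic shortcut and so genuinely covers positive-dimensional $G$, the case relevant for shape spaces.
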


\begin{proof}
    From $\Prb\{X \in Q^*\} >0$ infer with (v) of Lemma \ref{lem:basic-lifts} that $\nu \in Q^*$, i.e $\mu \in M^*=F$ for every $\mu \in \pi^{-1}(\nu)$, due to Theorem \ref{th:F=Mstar}. By definition of $F$, then $\vol\{p \in M: p \not \in L'_\mu\} =0$ and thus $\Prb\{X  \in L'_\mu\} >0$ by absolute continuity of $X$ w.r.t. $\vol$. The assertion now follows from (vi) of Lemma \ref{lem:basic-lifts}.
\end{proof}



\section{Almost Everywhere Continuity and Uniqueness of Optimal Lifts}

\begin{Th}\label{th:cont-and-uniq-opt-lift} Let $p \in M^*$. Then there is a Borel measurable $Q_p\subset Q$ such that
 \begin{enumerate}[label=(\roman*)]
  \item $\vol\big(M\setminus \pi^{-1}(Q_p)\big) =0$,
  \item $\ell_p = \ell'_p$ on $Q_p$ for any two optimal lifts $\ell_p$ and $\ell'_p$ through $p$,
  \item $Q_p \to M,\quad q\mapsto \ell_p(q)$ is continuous. 
 \end{enumerate}
\end{Th}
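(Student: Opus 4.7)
The plan is to take $Q_p$ to be (a Borel measurable modification of) $\pi(F_p)$, where $F_p$ is from Definition \ref{def:CS-F}, and then to read off all three assertions from the machinery already developed together with a compactness-plus-uniqueness argument.

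First I would establish (i). Since $p\in M^*$, Theorem \ref{th:F=Mstar} gives $p\in F$, so $\vol(M\setminus F_p)=0$, and by Lemma \ref{lem:F_p_basics}(ii) one has $F_p=\pi^{-1}(\pi(F_p))$, so that $\vol\bigl(M\setminus\pi^{-1}(\pi(F_p))\bigr)=0$, equivalently $\vol_\sharp(Q\setminus\pi(F_p))=0$ (cf.\ Lemma \ref{lem:F_p_advanced}(i)). If $\pi(F_p)$ itself fails to be Borel (being the continuous image of a Borel set it is only analytic in general), I would pass to a Borel subset of $\pi(F_p)$ of equal $\vol_\sharp$-measure using universal measurability of analytic sets together with inner regularity of the Radon measure $\vol_\sharp$; concretely, pick a Borel $Q_p\subseteq\pi(F_p)$ with $\vol_\sharp(\pi(F_p)\setminus Q_p)=0$, which still satisfies $\vol(M\setminus\pi^{-1}(Q_p))=0$.

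Assertion (ii) is then immediate from the definition of $F_p$: for $q\in Q_p\subseteq\pi(F_p)$ the set $\pi^{-1}(q)\cap L_p$ is a singleton, so any measurable optimal lift through $p$ is forced to take precisely that unique value at $q$, and thus $\ell_p(q)=\ell'_p(q)$ for any two choices.

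For (iii), I would use a Heine–Borel subsequence argument exploiting the uniqueness provided by $Q_p$. Fix $q\in Q_p$ and $Q_p\ni q_n\to q$. Then
$$d(\ell_p(q_n),p)=d_Q(q_n,\pi(p))\ \longrightarrow\ d_Q(q,\pi(p))=d(\ell_p(q),p),$$
so $\{\ell_p(q_n)\}$ is bounded in $M$; since $M$ is complete, Hopf–Rinow makes $M$ Heine–Borel, giving a convergent subsequence $\ell_p(q_{n_k})\to p^*$. Continuity of $\pi$ yields $\pi(p^*)=q$, and passing to the limit in the displayed equality yields $d(p^*,p)=d_Q(q,\pi(p))=d(p,\pi^{-1}(q))$, so $p^*\in\pi^{-1}(q)\cap L_p$. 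Because $q\in\pi(F_p)$, this intersection is the singleton $\{\ell_p(q)\}$; hence every subsequential limit equals $\ell_p(q)$, and therefore $\ell_p(q_n)\to\ell_p(q)$.

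The main obstacle is the Borel measurability of $Q_p$: the natural candidate $\pi(F_p)$ is the $\pi$-image of a set that is itself defined by a quantifier over the Lie group orbit, hence a priori only analytic. I expect to handle this by selecting a Borel subset of full $\vol_\sharp$-measure, using either universal measurability of analytic sets or, alternatively, a direct construction in terms of a fixed reference measurable optimal lift $\ell_p^{(0)}$ (Theorem \ref{th:exist-opt-measurable-lift}) by thinning with a countable dense subset of $G$ and removing a $\vol_\sharp$-null Borel superset of the exceptional $q$'s.
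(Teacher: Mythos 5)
Your proof follows essentially the same route as the paper: it takes $Q_p=\pi(F_p)$, gets (i) from Theorem \ref{th:F=Mstar} together with Lemma \ref{lem:F_p_basics}(ii), gets (ii) from the uniqueness built into the definition of $F_p$, and proves (iii) by exactly the bounded-sequence/cluster-point argument that constitutes the paper's Lemma \ref{lem:cont-and-uniq-opt-lift}. Your additional worry about Borel measurability of $\pi(F_p)$ is a legitimate point the paper passes over silently, and your remedy (replacing $\pi(F_p)$ by a full-$\vol_\sharp$-measure Borel subset, which preserves (i)--(iii)) is a harmless refinement rather than a genuinely different approach.
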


\begin{proof}
 Setting $Q_p = \pi(F_p)$ with $\pi^{-1}(Q_p) = F_p$ (due to (ii) Lemma \ref{lem:F_p_basics}), this is a consequence of Lemma \ref{lem:cont-and-uniq-opt-lift} below, since for $p\in F$, $0=\vol(M\setminus F_p)$ by definition and $F=M^*$ by Theorem \ref{th:F=Mstar}.
\end{proof}

\begin{Lem}\label{lem:cont-and-uniq-opt-lift}
Let $p \in M$ and $\pi(F_p) \ni q'_n \to q'\in \pi(F_p)$ with $p'_n \in L_p\cap \pi^{-1}(q'_n)$ and $p'\in L_p\cap \pi^{-1}(q')$. Then $p'_n \to p'$.
\end{Lem}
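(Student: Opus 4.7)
The plan is to show that the sequence $(p'_n)$ is bounded, extract a convergent subsequence via a Heine-Borel argument, and then identify every cluster point as $p'$ using the uniqueness afforded by the hypothesis $q' \in \pi(F_p)$. Since any bounded sequence all of whose cluster points agree must converge to that common value, the conclusion follows.

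First I would observe that $p'_n \in L_p \cap \pi^{-1}(q'_n)$ gives $d(p,p'_n) = d_Q(\pi(p),q'_n)$ and similarly $d(p,p') = d_Q(\pi(p),q')$. Since $q'_n \to q'$, continuity of $d_Q$ gives $d(p,p'_n) \to d(p,p')$, so $(p'_n)$ lies in a bounded (hence metrically closed-and-bounded) subset of $M$. Because $M$ is a complete Riemannian manifold, by Hopf-Rinow it is Heine-Borel, so after passing to a subsequence we may assume $p'_{n_k} \to p''$ for some $p'' \in M$.

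Next I would identify $p''$. Continuity of the projection $\pi$ combined with $\pi(p'_{n_k}) = q'_{n_k} \to q'$ forces $\pi(p'') = q'$, so $p'' \in \pi^{-1}(q')$. Moreover, passing to the limit in $d(p,p'_{n_k}) = d_Q(\pi(p),q'_{n_k})$ yields $d(p,p'') = d_Q(\pi(p),q') = d(p,\pi^{-1}(q'))$, i.e.\ $p'' \in L_p \cap \pi^{-1}(q')$. Now the hypothesis $q' \in \pi(F_p)$ means that some (equivalently, by (ii) of Lemma \ref{lem:F_p_basics}, every) point of $\pi^{-1}(q')$ lies in $F_p$, so by the very definition of $F_p$ there is a \emph{unique} element of $\pi^{-1}(q')$ in optimal position to $p$. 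Since both $p'$ and $p''$ are such elements, we conclude $p'' = p'$.

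Finally, the above reasoning applies to every subsequence of $(p'_n)$: every subsequence has a further sub-subsequence converging to $p'$. Combined with boundedness, this forces $p'_n \to p'$ in $M$. The only conceptual step that requires care is the uniqueness in $\pi^{-1}(q')$, which is exactly what the assumption $q' \in \pi(F_p)$ buys us; without it (e.g.\ if $q'$ were approached from inside $\pi(F_p)$ but $q'$ itself admitted several lifts at equal distance from $p$), the argument would only yield cluster points in $L_p \cap \pi^{-1}(q')$ and convergence could fail.
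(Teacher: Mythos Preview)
Your proof is correct and follows essentially the same approach as the paper: bound the sequence via $d(p,p'_n)=d_Q(\pi(p),q'_n)\to d_Q(\pi(p),q')$, use Heine--Borel to produce a cluster point $p''\in L_p\cap\pi^{-1}(q')$, and invoke the uniqueness encoded in $q'\in\pi(F_p)$ to identify $p''=p'$. Your write-up is in fact slightly more careful than the paper's in justifying $p''\in\pi^{-1}(q')$ via continuity of $\pi$ and in spelling out the sub-subsequence argument.
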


\begin{proof}
 By definition of $F_p$, both $p'_n$ and $p'$ are uniquely in optimal position to $p$. With $q = \pi^{-1}(p)$ we have thus that
 $$d(p'_n,p) = d_Q(q'_n,q) \to d_Q(q',q)$$
 yielding that $p'_n$ is bounded and hence it has a cluster point $p''\in M$ which is in $\pi^{-1}(q')$ since the latter is closed. Hence $d(p'',p)$ agrees with the limit $d_Q(q',q)$ of $d(p'_n,p)$, i.e. $p''$ is in optimal position to $p$. By uniqueness conclude $p''=p'$ yielding the assertion. 
\end{proof}

\section{Two-Sample Tests Using Optimal Lifts}\label{scn:tests}

Here we are concerned with generalizations of the classical two-sample test for 
$$ X_1,\ldots, X_n\iid X\mbox{ and } Y_1,\ldots, Y_m\iid Y\,, $$
jointly independent on $M= \RR^d$, $d\in \NN$, with population and sample means 
$$ \bar X_n = \frac{1}{n}\sum_{j=1}^n X_j,\quad \EE[X],\quad\bar Y_m = \frac{1}{m}\sum_{j=1}^m X_j,\quad \EE[Y]\,,$$
respectively, covariances 
\begin{eqnarray*}
\cov_n[X] = \frac{1}{n}\sum_{j=1}^n (X_j-\bar X_n)(X_j-\bar X_n)^T,&& \cov[X] = \EE[(X-\EE[X])(X-\EE[X])^T],
\\  \cov_m[Y] = \frac{1}{m}\sum_{j=1}^m (Y_j-\bar Y_m)(Y_j-\bar Y_m)^T,&& \cov[Y] = \EE[(Y-\EE[Y])(Y-\EE[Y])^T]
\end{eqnarray*}
and \emph{pooled covariance}
$$ \cov_{n,m}[X,Y]  = \frac{1}{n+m-2}\left(n\cov_n[X] + m\cov_m[Y] \right)\,.$$
Then, under $H_0: X\sim Y \sim \cN(\EE[X],\cov[X])$  (multivariate normal distribution), 
\begin{eqnarray}\label{eq:t-stats}
\hat T&:=&  \frac{mn}{m+n} (\bar X_n - \bar Y_m)^T \left(\cov_{n,m}[X,Y]\right)^{-1} (\bar X_n - \bar Y_m)~\sim~ T_{d,m+n-2}^2 \end{eqnarray}
where $T_{d,k}^2$ denotes the well known Hotelling $T^2$-distribution with $(d,k)$ degrees of freedom, see, e.g.  \cite{mardia2024multivariate}, if $\cov_{n,m}[X,Y]$ is of rank $d$. Here we have utilized the expected value $\EE[f(X)] = \int_{\RR^d} f(x)\,d\Prb^X(x)$ for $\Prb^X$-integrable functions.

Further in case of arbitrary $X,Y$ which are not necessarily normally distributed but feature second moments $\EE[X^TX], \EE[Y^TY] < \infty$ with full rank covariances, \emph{robustness under nonnormality holds} \citep[p. 207, 321]{RomanoLehmann2005}, i.e.
$$ \hat T \leadsto Z\mbox{ by which we mean } \frac{\hat T }{Z} \inD 1$$
if $Z \sim T^2_{d,m+n-2}$ is independent and $n,m\to \infty$, under 
$$H_0: \EE[X] = \EE[Y] \mbox{ and } \Big( \frac{n}{m} \to 1\mbox{ or } \cov[X] = \cov[Y]\Big)\,.$$

With the $\alpha$-quantile $T_{d,m+n-2,\alpha}$ of the Hotelling  $T^2$-distribution with $(d,m+n-2)$ degrees of freedom defined by $\Prb\{Z\leq T_{n,m,\alpha}\} = \alpha$ the Hotelling test rejects $H_0$ at nominal level $\alpha \in [0,1]$ if
$$ \hat T > T_{d,m+n-2,1-\alpha}\,.$$
Under the above assumptions, this test has asymptotically the true level $\alpha$ \citep{RomanoLehmann2005}.

In the following we introduce four different tests for data on a quotient space $Q$. For shape spaces (Section \ref{scn:shape} introduces some classical shape spaces and a new one), the first one gives, among others, the classical two-sample test based on \emph{Procrustes analysis} \citep{DM16} or for \emph{intrinsic MANOVA} \citep{HHM09}. The other two are new and build on the strong law for optimal lifts (Theorem \ref{th:SL-optimal-lift}) and their local smoothness (Corollary \ref{cor:lift-smooth}). For all three tests we have the following setup.

\begin{As} We assume that
$$ W_1,\ldots, W_n\iid W\mbox{ and } Z_1,\ldots, Z_m\iid Z\,, $$
are jointly independent on $Q$, $\dim(Q^*)=d$ with population Fr\'echet means $\nu^W$ and $\nu^Z$, respectively, and measurable selections $\nu_n^W$ and $\nu_m^Z$ of sample Fr\'echet means,  respectively. Further, let $\mu_W \in \pi^{-1}(\nu_W)$  with optimal lift $\ell_{\mu_W}$ through $\mu_W$ and $\mu_Z \in \pi^{-1}(\nu_Z)$ with optimal lift $\ell_{\mu_Z}$ through $\mu_Z$. We assume that $W,Z$ are absolutely continuously distributed with respect to $\pi \circ \vol$.

Moreover, we assume that either $n/m \to 1$ or that $\cov[X] = \cov[\theta_{\mu_W,\mu_Z}\circ Y]$ where 
\begin{eqnarray*}X&: =& (\exp_{\mu_W})^{-1} \circ \ell_{\mu_W}\circ W,\\ 
Y &:=& (\exp_{\mu_Z})^{-1} \circ \ell_{\mu_Z}\circ Z
\end{eqnarray*}
 and $\theta_{\mu_W,\mu_Z}$ is the parallel transport along a length minimizing geodesic from $\mu_Z$ to $\mu_W$. 
\end{As}

Under these assumptions,
\begin{enumerate}
 \item by Theorem \ref{th:Le-Barden}, $X$ and $Y$ are a.s. well defined, 
 \item $\nu^W , \nu^Z \in Q^*$ by the manifold stability Theorem \ref{th:manifold-stability},
\end{enumerate}
and in the following, we test for 
$$H_0: \nu^W =  \nu^Z$$
at a given level $\alpha \in [0,1]$.

\begin{Test}[Pooled Lifting]\label{test:pooled-lifting} Let $\nu^{W,Z}_{n,m}$ be a measurable selection of the pooled sample Fr\'echet means of $ W_1,\ldots, W_n, Z_1,\ldots, Z_m$, let $\mu^{W,Z}_{n,m}\in \pi^{-1}(\nu^{W,Z}_{n,m})$ be a measurable selection and let $\ell_{\mu^{W,Z}_{n,m}}$ be an optimal lift through $\mu^{W,Z}_{n,m}$. Setting
\begin{eqnarray*}
X_j&: =& (\exp_{\mu^{W,Z}_{n,m}})^{-1} \circ \ell_{\mu^{W,Z}_{n,m}}\circ W_j, \quad 1\leq j \leq n,\\ 
Y_j &:=& (\exp_{\mu^{W,Z}_{n,m}})^{-1} \circ \ell_{\mu^{W,Z}_{n,m}}\circ Z_j, \quad 1\leq j \leq m
\end{eqnarray*}
and $T_{pooled} := \hat T$ from (\ref{eq:t-stats}), reject $H_0$ if $T_{pooled} > T_{n,m,1-\alpha}$.
 
\end{Test}

In this test the sample covariance matrices have been computed using averages in the tangent spaces. Alternatively, these averages can be replaced in an intrinsic fashion by preimages of sample Fr\'echet means. 

\begin{Test}[Pooled Lifting Intrinsically]\label{test:pooled-intrinsic-lifting}  With the notation from Test  \ref{test:pooled-lifting}, setting 
\begin{eqnarray*}
\bar X^{(i)}_n := (\exp_{\mu^{W,Z}_{n,m}})^{-1} \circ \ell_{\mu^{W,Z}_{n,m}}\circ \nu_n^W,&& 
\bar Y^{(i)}_m := (\exp_{\mu^{W,Z}_{n,m}})^{-1} \circ \ell_{\mu^{W,Z}_{n,m}}\circ \nu_m^Z
\end{eqnarray*}
define the \emph{intrinsic sample covariances} and corresponding intrinsic test statistic
\begin{eqnarray}\label{eq:t-stats-i} \nonumber
\cov^{(i)}_n[X] &:=& \frac{1}{n}\sum_{j=1}^n (X_j- \bar X^{(i)}_n)(X_j-\bar X^{(i)}_n)^T,\\  \nonumber
\cov^{(i)}_m[Y] &:=& \frac{1}{m}\sum_{j=1}^m (Y_j-\bar Y^{(i)}_m)(Y_j-\bar Y^{(i)}_m)^T,\\\nonumber
\cov^{(i)}_{n,m}[X,Y] &:=& \frac{1}{n+m-2}\left(n\cov^{(i)}_n[X] + m\cov^{(i)}_m[Y] \right),\\
\hat T^{(i)}&:=&  \frac{mn}{m+n} (\bar X^{(i)}_n - \bar Y^{(i)}_m)^T \left(\cov^{(i)}_{n,m}[X,Y]\right)^{-1} (\bar X^{(i)}_n - \bar Y^{(i)}_m) 
\end{eqnarray}
Then, setting $T_{pooledI} := \hat T^{(i)}$ from (\ref{eq:t-stats-i}), reject $H_0$ if $T_{pooledI} > T_{d,m+n-2,1-\alpha}$.
\end{Test}

Alternatively we can lift each sample separately.

\begin{Test}[Individual Lifting]\label{test:individual-lifting} With the notation from Test  \ref{test:pooled-lifting} let $\mu^{W}_{n} \in \pi^{-1}(\nu^{W}_{n} )\cap L_{\mu^{W,Z}_{n,m}}$, $\mu^{Z}_{n} \in \pi^{-1}(\nu^{Z}_{n} )\cap L_{\mu^{W,Z}_{n,m}}$, i.e. both are a.s. uniquely (due to Theorems \ref{th:manifold-stability} and \ref{th:F=Mstar}) in optimal position to $\mu^{W,Z}_{n,m}$ and let $\ell_{\mu^{W}_{n}}$ be an optimal lift through $\mu^{W}_{n}$, $\ell_{\mu^{Z}_{m}}$ be an optimal lift through $\mu^{Z}_{m}$. Then setting
\begin{eqnarray*}
\bar X^{(i)}_n := (\exp_{\mu^{W}_{n}})^{-1} \circ \ell_{\mu^{W}_{n}}\circ \nu_n^W,&& 
\bar Y^{(i)}_m := (\exp_{\mu^{Z}_{m}})^{-1} \circ \ell_{\mu^{Z}_{m}}\circ \nu_m^Z,
\end{eqnarray*}
as well as,
\begin{eqnarray*}
X_j&: =& (\exp_{\mu^{W}_{n}})^{-1} \circ \ell_{\mu^{W}_{n}}\circ W_j, \quad 1\leq j \leq n,\\ 
Y_j &:=& (\exp_{\mu^{Z}_{m}})^{-1} \circ \ell_{\mu^{Z}_{m}}\circ Z_j, \quad 1\leq j \leq m
\end{eqnarray*}
and $T_{individual} := \hat T^{(i)}$ from (\ref{eq:t-stats-i}), reject $H_0$ if $T_{individual} > T_{d,n+m-2,1-\alpha}$.
 
\end{Test}

If it is computationally too expensive to compute the pooled sample mean, in particular in view of the bootstrap tests introduced further below, instead optimal position with respect to the other sample's mean.

\begin{Test}[Individual Asymmetric Lifting]\label{test:individual-asymetric-lifting} Measurably select $\mu^{W}_{n} \in \pi^{-1}(\nu^{W}_{n} )$ and $\mu^{Z}_{n} \in \pi^{-1}(\nu^{Z}_{n} )\cap L_{\mu^{W}_{n}}$, i.e. both are a.s. uniquely (due to Theorems \ref{th:manifold-stability} and \ref{th:F=Mstar}) in optimal position to one another and define all other quantities as in Test \ref{test:individual-lifting}. With those compute $T_{individualA} := \hat T^{(i)}$ from (\ref{eq:t-stats}). Then reject $H_0$ if $T_{individualA} > T_{d,n+m-2,,1-\alpha}$.
 
\end{Test}

\begin{Rm}\label{rm:test-smeary}
 As detailed above and in \cite{HE_Handbook_2020}, if the Hessian of the Fr\'echet function  $q\mapsto F^{\ell_{p'}\circ W}(q)$ in the exponential chart has all eigenvalues equal to $2$ (as is the case if $Q$ is Euclidean) under $H_0$ at $q = \mu^W = \mu^Z$ the various tests have asymptotic level $\alpha$ for the various choices of $p'= \nu^W \in \pi^{-1}(\mu^W)$. The smaller the eigenvalues, the more drastic the deviation \citep{hundrieser2024lowerA}, and in case of zeroes, a phenomenon called (directional) \emph{smeariness} kicks in \citep{EltznereHuckemann2019,DHH-GSI21}, making the tests completely inapplicable. 
\end{Rm}

For this reason, \cite{HuElHu2024} propose to use bootstrap versions of the above quantile based tests, namely simulating quantiles via suitably resampling, as follows. Here is the generic procedure from \cite{eltzner2017bootstrapping} applicable to each of the three latter tests. Taking averages in tangent spaces instead of intrinsic sample means, the thus modified bootstrap version is also applicable to the first Test \ref{test:pooled-lifting}. In fact, the following test, inspired by the test from \citet{welch1947generalization}, also works well in case of different covariances.

\begin{Test}[Bootstrap Version for Tests \ref{test:pooled-intrinsic-lifting}, \ref{test:individual-lifting} and \ref{test:individual-asymetric-lifting}]\label{test:bootstrap}
In a first round of bootstrapping from $W_1,\ldots,W_n$ obtain $\cov^{*,(i)}_n[W]$ and of bootstrapping from $Z_1,\ldots,Z_m$ obtain $\cov^{*,(i)}_m[Z]$. Then set 
$$ C : = \cov^{*,(i)}_n[W] + \cov^{*,(i)}_m[Z]\,.$$ 
In a second independent round of bootstrapping $B\in \RR$ (large) times from $W_1,\ldots,W_n$ and from $Z_1,\ldots,Z_m$ obtain 
$$ d^{*,X,b}:= \bar X^{*,b}_n - \bar X^{(i)}_n,\quad d^{*,Y,b}:= \bar Y^{*,b}_m - \bar X^{(i)}_m,\quad b=1,\ldots,B\,.$$
From these obtain the bootstrap $\alpha$-quantile $T^*_{m,n,\alpha}$ of the data 
$$\left(d^{*,X,b}- d^{*,Y,b}\right)^T C^{-1}\left(d^{*,X,b}- d^{*,Y,b}\right)\,.$$
Then, reject $H_0$ if 
$$\hat T^{(i,*)} :=  (\bar X^{(i)}_n - \bar Y^{(i)}_m)^T C^{-1}  (\bar X^{(i)}_n - \bar Y^{(i)}_m) > T^*_{d,m+n-2,\alpha}\,.$$
\end{Test}

For data on the circle \cite{HuElHu2024} have shown that the corresponding tests keep the level $\alpha$ asymptotically. For general (quotient) spaces this is open, but simulations indicate that the levels are asymptotically kept. We illustrate some simulations in Section \ref{scn:simulation-application}.

We note that \cite{BP05} have been the first to introduce intrinsic two-sample tests for manifold data. One of their test is based on quantiles of a $\chi^2$-distribution and the other is a bootstrap alternative (see also \cite{BL17}).

\section{Reverse Labeling Reflection Shape Spaces}\label{scn:shape}

\paragraph{Classical shape spaces and reflection shape spaces}
\citep{DM16} model, modulo certain group actions, landmark configurations $A=(a_1,\ldots,a_k)\in \RR^{m\times k}$, giving the position of $3\leq k \in \NN$ landmark vectors $a_1,\ldots,a_k \in \RR^m$ of dimension $m\in \NN$, where we assume that $2\leq m < k$. One basic group action is that of translations and rotations (Euclidean motions), given by
$$ SO(m) \sdp \RR^m \mbox{ acting on }M=\RR^{m\times k}\mbox{ via } A\stackrel{(g,a)}{\to} gA + a1_k^T \quad (g,a) \in SO(m) \sdp \RR^m\,.$$ 
Here, with the standard unit vectors $e_1,\ldots,e_k \in \RR^m$, $1_k := e_1+\ldots + e_k\in \RR^k$ denotes the vector with all entries equal to one and the (semidirect) product of $(g,a), (h,b) \in   SO(m) \sdp \RR^m $ is given by $(gh,b+ga)$. Letting $I_k =(e_1,\ldots,e_k) \in \RR^{k\times k}$ denote the unit matrix, there are two popular ways to filter out translation: 
\begin{enumerate}[label=(\roman*)]
 \item Centering by subtracting the mean landmark: 
 $\RR^{m\times k} \to \cC_m^k$, $A\mapsto A \big(I_k - \frac{1}{k}1_k1_k^T\big)$, here
 $$\cC_m^k :=\{B\in \RR^{m\times k}: B 1_k=0\}$$
 denotes the \emph{centered configurations}.
 \item Helmertizing: $\RR^{m\times k} \to \RR^{m\times (k-1)}, A\mapsto AH_k$ where $H_k \in \RR^{k\times (k-1)}$ is chosen such that $(H_k|\frac{1}{\sqrt{k}}1_k) \in SO(k)$. Usually $H_k$ is chosen as the \emph{sub Helmert matrix}:
 $$ H_k =\left(\begin{array}{cccc}
	   {1 \over \sqrt{2}} &{1 \over \sqrt{6}} & \dots& {1 \over
	   \sqrt{k(k-1)}}\\ 
	   -{1 \over \sqrt{2}} &{1 \over \sqrt{6}} & \dots& {1 \over \sqrt{k(k-1)}} \\ 
	   0&-{2 \over \sqrt{6}} &\dots& {1 \over \sqrt{k(k-1)}} \\ 
	   \vdots&\vdots&\ddots&\vdots \\
	   0&0 &\dots& -{k-1 \over \sqrt{k(k-1)}}\\ 
		     \end{array}
	 \right)\,.$$
\end{enumerate}
As usual, $\RR^{m\times k}$ is equipped with the standard Euclidean inner product
$$ \langle A,B \rangle  := \tr(A^TB),\quad A,B\in \RR^{m\times k}$$
yielding the Frobenius norm
$$ \|A\| := \sqrt{\tr(A^TA)}$$ 
where $\tr$ denotes the matrix trace. Since the action of scaling 
$$ \RR_+\mbox{ acting on } \RR^{m\times k} \mbox{ via } A \stackrel{\lambda}{\to} \lambda A,\quad \lambda \in \RR_+$$ 
is neither isometric nor proper (the only closed orbit is that of $0\in \RR^{m\times k}$), representatives on the unit hypersphere 
      $$\SSS^{m\times k-1} := \{B\in \RR^{m\times l}: \|B\| = 1\} $$ 
are chosen. Thus, taking away the problematic diagonal orbit originating from single point configurations,
$$\cD_m^k :=\{A=(a,\ldots,a)\in \RR^{m\times k}: a \in \RR^m\}\,,$$ 
filtering out translation and scaling, one arrives at 
$$\RR^{m\times k} \setminus  \cD_m^k  \to \left\{\begin{array}{rl}
                                                 \cC_m^k \setminus \{0\}&\mbox{ centering}\\
                                                 \RR^{m\times (k-1)}  \setminus \{0\}&\mbox{ Helmertizing}
                                                \end{array}\right\}\stackrel{B \mapsto \frac{B}{\|B\|}}{\to}\left\{\begin{array}{rl}
                                                 \SSS^{m\times k-1}\cap \cC_m^k&\mbox{ centering}\\
                                                 \SSS^{m\times (k-1)-1} &\mbox{ Helmertizing}
                                                \end{array}\right.\,.$$
   Usually, $\SSS^{m\times (k-1)-1}$ is called the \emph{pre-shape sphere} and we also call 
   $$ \cS_m^k := \SSS^{m\times k-1}\cap \cC_m^k$$
   the \emph{centered pre-shape sphere}.
   
   In the last step, rotations are filtered out leading to the classical \emph{shape space} 
   $$ \Sigma_m^k\mbox{ viewed as either } \cS_m^k/SO(m) \mbox{ or } \SSS^{m\times (k-1)-1}/SO(m)\,,$$
   or orthogonal transformations are filtered out leading to the classical \emph{reflection shape space} 
   $$ \cR\Sigma_m^k\mbox{ viewed as either } \cS_m^k/O(m) \mbox{ or } \SSS^{m\times (k-1)-1}/O(m)\,. $$
   Here $SO(m)$ and $O(m)$ act, as above, via multiplication from the left.

   Notably, for $A,B \in \SSS^{m\times (k-1)-1}$ their spherical distance is
   $d(A,B)  = \arccos \tr(A^TB)$
   and hence,
   $$\min_{R \in O(m)} d(A,RB) = \arccos \max_{R \in O(m)} \tr(A^TRB) = \arccos \tr\Lambda$$
   where $BA^T = U\Lambda V^T$ is a singular value decomposition. In consequence the maximal possible distance in $\cR \Sigma_m^k$ is $\pi/2$ and a similar argument yields the same maximal distance in $\Sigma_m^k$.
   
   \paragraph{Planar shape and planar reflection shape} (see additionally \cite{HZ06,HH09}) take advantage from identifying $A= (a_1,\ldots,a_k) \in \RR^{2\times k}$ with the complex row vector $z=(z_1,\ldots,z_k)\in \CC^k$, where $a_j = (\RE(z_j), \IM(z_j))^T$, $j=1,\ldots,k$. Rotation by the angle $\phi \in [0,2\pi)$ corresponds to complex multiplication from the left by $e^{i\phi}$. In consequence, $ \Sigma_2^k$ carries a canonical Riemannian manifold structure of the complex projective space $\CC P^{k-2}$ of complex dimension $k-2$ (real dimension $2k-4$). 
   
   Reflection then corresponds to joint complex conjugation:
   $$R.z = R.(z_1,\ldots,z_k) := (\bar z_1,\ldots,\bar z_k) = \bar z$$
   and hence
   $\cR\Sigma_2^k$ also has a singular stratum comprising all shapes
   $$ [z] = \{e^{i\phi}z: 0\leq \phi < 2\pi\}\mbox{ where } z \in \{(z_1,\ldots,z_k) \in \SSS^{m\times k-1}\cap \cC_m^k: z_j \in \RR\mbox{ for all }1\leq j \leq k\}\,.$$
   In particular $\Sigma_2^3$ is a sphere with radius $1/2$ and $\cR\Sigma_2^3$ can be viewed as its closed upper hemisphere. This is due to the explicit Hopf fibration (the Riemannian isometry is achieved by halving the radius $\SSS^2$):
   $$\SSS^{3} \to \SSS^{3}/\SSS^1 \cong \SSS^2,\quad (z_1,z_2) \mapsto \left(\begin{array}{c}
                         2\RE(z_1 \bar z_2)\\
                         2\IM(z_1 \bar z_2)\\
                         |z_1|^2-|z_2|^2
                        \end{array}\right) = \left(\begin{array}{c}
                         2\RE(z_1 \bar z_2)\\
                         2\IM(z_1 \bar z_2)\\
                         2|z_1|^2-1
                        \end{array}\right)\,. $$ 
   
   Further, for $z,w \in \SSS^{2\times k-1}\subset \CC^{k}$ with $z1_k = 0=w1 _k$ we have
   \begin{eqnarray*}
   d_{\Sigma_2^k}([z]_{\Sigma_2^k},[w]_{\Sigma_2^k}) &=& \arccos |z \bar w^T| \\
   d_{\cR\Sigma_2^k}([z]_{\cR\Sigma_2^k},[w]_{\cR\Sigma_2^k}) &=& \min\{\arccos |z w^T|, \arccos |z \bar w^T|\}\,.
   \end{eqnarray*}

   \paragraph{The new reverse labeling reflection shape spaces} are required in some applications, for instance when studying the shape of planar of one-dimensional non oriented string structures using landmark configurations, a frequent task in (micro)biology. Then, their shapes are preserved when placing landmarks from one end to the other, also in reversed order. To this end,
   introduce the \emph{reverse labeling} action
   $$ \RR^{m\times k} \to \RR^{m\times k},\quad A\mapsto AL_k\mbox{ with }
   L_k:=(e_k,\ldots,e_1) = \left(\begin{array}{ccc} 0&\ldots&1\\  \vdots &\rotatebox{90}{$\ddots$}&\vdots \\ 1&\ldots&0 \end{array}\right) \in \RR^{k\times k}\,.$$
   Notably this action commutes with centering and with the action of $O(m)$. 
   Since $L_k^2 = I_k$, setting $G=\{L_k,I_k\}$, we have the following.
   
   \begin{Def} The \emph{reverse labeling reflection shape space} for $m$-dimensional $k$-landmark configurations is given by 
   $$\cR\cR\Sigma_m^k := \cR\Sigma_m^k/G:= \big(\cS_m^k/G\big)/O(m)\,.$$
    \end{Def}   
   Reverse labeling is obviously isometric but it does not commute with Helmertizing since $H_k^TL_kH_k \neq L_{k-1}$ :
   
   \begin{Lem} For $H_k\in\RR^{k\times k-1}$ with $(H_k|\frac{1}{\sqrt{k}}1_k) \in SO(k)$, $L_k$ as above and arbitrary $A \in \RR^{m\times k}$ we have 
   $$AL_kH_k  = AH_k (H_k^TL_kH_k)\,,\mbox{ where }H_k^TL_kH_k \in O(k-1)\,.$$
   \end{Lem}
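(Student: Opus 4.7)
The plan is to exploit the fact that the augmented matrix $(H_k \mid \tfrac{1}{\sqrt{k}} 1_k)$ is orthogonal, which yields three identities I will use repeatedly: $H_k^T H_k = I_{k-1}$, $H_k^T 1_k = 0$ (equivalently $1_k^T H_k = 0$), and the ``centering'' identity $H_k H_k^T = I_k - \tfrac{1}{k} 1_k 1_k^T$. I will also use two elementary properties of the reversal matrix $L_k$: it is symmetric ($L_k^T = L_k$) and involutive ($L_k^2 = I_k$), and it fixes the all-ones vector, $L_k 1_k = 1_k$.

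For the first identity, it suffices (by multiplying both sides on the left by any $A$) to prove that $L_k H_k = H_k (H_k^T L_k H_k)$. Starting from the right-hand side,
\begin{equation*}
  H_k (H_k^T L_k H_k) \;=\; (H_k H_k^T) L_k H_k \;=\; \Bigl(I_k - \tfrac{1}{k} 1_k 1_k^T\Bigr) L_k H_k \;=\; L_k H_k - \tfrac{1}{k} 1_k (1_k^T L_k) H_k.
\end{equation*}
Since $1_k^T L_k = 1_k^T$ and $1_k^T H_k = 0$, the correction term vanishes and the identity $L_k H_k = H_k (H_k^T L_k H_k)$ follows.

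For the orthogonality claim $H_k^T L_k H_k \in O(k-1)$, I compute the product with its transpose using $L_k^T = L_k$:
\begin{equation*}
(H_k^T L_k H_k)^T (H_k^T L_k H_k) \;=\; H_k^T L_k (H_k H_k^T) L_k H_k \;=\; H_k^T L_k \Bigl(I_k - \tfrac{1}{k} 1_k 1_k^T\Bigr) L_k H_k.
\end{equation*}
Expanding, the main piece is $H_k^T L_k^2 H_k = H_k^T H_k = I_{k-1}$, while the correction $\tfrac{1}{k} H_k^T L_k 1_k 1_k^T L_k H_k$ vanishes because $L_k 1_k = 1_k$ and $H_k^T 1_k = 0$. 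Hence $(H_k^T L_k H_k)^T (H_k^T L_k H_k) = I_{k-1}$, completing the proof.

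There is no real obstacle here: the whole argument is a short matrix calculation, and the only step that requires any care is recognising that the $k$-dimensional reversal matrix $L_k$ commutes nicely with the projection onto the hyperplane $\{x\in\RR^k : 1_k^T x = 0\}$ precisely because it fixes $1_k$. Everything else is bookkeeping with $H_k H_k^T = I_k - \tfrac{1}{k}1_k 1_k^T$ and $H_k^T H_k = I_{k-1}$.
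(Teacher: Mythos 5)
Your proof is correct and follows essentially the same route as the paper's: both identities are reduced to the relations $H_kH_k^T = I_k - \tfrac{1}{k}1_k1_k^T$, $1_k^TH_k = 0$, $L_k^2 = I_k$ and $L_k1_k = 1_k$, with the correction terms vanishing exactly as in the paper. Nothing is missing.
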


   \begin{proof} The first equality follows from $H_kH_k^TL_k = (I_k - \frac{1}{k}1_k1_k^T) L_k = L_k - \frac{1}{k}1_k1_k^T$ and the fact that $1_k^TH_k=0$ by construction. For the second assertion consider
   $$(H_k^TL_kH_k)^T H_k^TL_kH_k = H_k^TL_kH_kH_k^TL_kH = H_k^TL_k(L_k - \frac{1}{k}1_k1_k^T)H_k = I_{k-1}$$
   since $L_k^2= I_k$ and $H_k^TH_k = I_{k-1}$.
   \end{proof}
   
   In consequence, for reverse labeling reflection shape we only use the centered pre-shape sphere $\cS_m^k$ as above, on which $G$ acts from the right and $O(m)$ from the left. Thus, for $A,B \in \cS_m^k$, we have  
   $$ d_{\cR\cR\Sigma_m^k}([A]_{\cR\cR\Sigma_m^k},[B]_{\cR\cR\Sigma_m^k}) = \min\{\arccos\tr\Lambda,\arccos\tr\Lambda'\}$$
   where $BA^T = U\Lambda V^T$ and $BL_kA^T = U'\Lambda' (V')^T$ are singular value decompositions.
   
   \paragraph{For planar reverse labeling reflection shape} in complex representation,  for $z=(z_1,\ldots,z_k) \in \cS_2 ^k$,  we set $\tilde z := (z_k,\ldots,z_1) = zL_k$, so that
   $$ d_{\cR\cR\Sigma_2^k}([z]_{\cR\cR\Sigma_2^k},[w]_{\cR\cR\Sigma_2^k}) = \min\{\arccos |z w^T|, \arccos |z \bar w^T|,\arccos |\tilde z  w^T|, \arccos |\tilde z \bar w^T|\}\,.$$
    
    \begin{Lem}
     $\cR\cR \Sigma_2^3$ can be viewed as a closed third of the closed hemisphere $\cR\Sigma_2^3$ in the sphere $\Sigma_2^3$. In particular, the maximal distance in $\cR\cR \Sigma_2^3$ is $\pi/6$.
    \end{Lem}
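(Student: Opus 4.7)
The plan is to transport the reverse-labeling action $L_3$ through the explicit Hopf description $\Sigma_2^3\cong \SSS^2(1/2)$ recalled in the previous paragraphs, and then isolate a fundamental domain of $\langle L_3\rangle$ inside the closed hemisphere $\cR\Sigma_2^3$.

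First, I would Helmertize and pull $L_3$ back to the $\RR$-orthogonal $2\times 2$ matrix $R=H_3^T L_3 H_3$ acting $\CC$-linearly on $\CC^2\supset\SSS^3$, and then push this action forward through the Hopf formula $(z_1,z_2)\mapsto(2\RE(z_1\bar z_2),2\IM(z_1\bar z_2),|z_1|^2-|z_2|^2)$ to obtain the induced isometry of $\Sigma_2^3\cong\SSS^2(1/2)$.

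Second, to pin down this isometry I would identify its fixed set by solving $L_3 z=e^{i\phi}z$ for $z\in\cS_2^3$ up to the $SO(2)$-action: two orbits appear, namely the degenerate shape $(1,-2,1)/\sqrt 6$ (landmarks $1$ and $3$ coincident) and the collinear shape $(1,0,-1)/\sqrt 2$ (landmark $2$ the midpoint of $1,3$). Computing their Hopf images shows that both lie antipodally on the collinear equator $\{Y=0\}$; since an isometric involution of $\SSS^2$ with two isolated fixed points is the $\pi$-rotation about the axis through them, the $L_3$-action on $\Sigma_2^3$ is completely determined.

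Third, the conjugation $\tau$ already quotiented out to form $\cR\Sigma_2^3$ acts on the Hopf sphere as the reflection $Y\mapsto -Y$, whose fixed set is exactly the collinear equator. Since the $L_3$-axis lies in this fixed equator, $L_3$ swaps the two open hemispheres, so its induced action on $\cR\Sigma_2^3$ is represented by the composition $\tau\circ L_3$; this is a reflection of the hemisphere across the great plane spanned by the rotation axis and the polar $Y$-axis, and its fixed locus on $\cR\Sigma_2^3$ is the great semicircle joining the two equatorial singular shapes through the equilateral shape at the pole.

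Fourth, cutting the closed hemisphere along this fixed semicircle produces a closed region meeting each $\langle L_3\rangle$-orbit in $\cR\Sigma_2^3$ exactly once; I would identify this region geometrically with the claimed \emph{closed third} of $\cR\Sigma_2^3$ by reading off its bounding arcs and vertices, and then extract the diameter $\pi/6$ by evaluating the four-term distance formula from the previous paragraph at the pairs of vertex candidates. The main obstacle is this last step: because the $L_3$-rotation axis coincidentally lies on the $\tau$-fixed equator, the effective involution on the quotient hemisphere changes its geometric character, so correctly identifying the fundamental domain together with the pair realizing its diameter demands careful simultaneous bookkeeping of the $SO(2)$-, conjugation- and $L_3$-quotients.
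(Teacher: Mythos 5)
Your first three steps are sound and essentially reproduce (indeed, sharpen) the paper's computation: Helmertizing, pushing $L_3$ through the Hopf map, locating the two $L_3$-fixed shapes $(1,-2,1)/\sqrt6$ and $(1,0,-1)/\sqrt2$ at antipodal points of the collinear equator, and concluding that $L_3$ acts on $\Sigma_2^3\cong\SSS^2(1/2)$ as the $\pi$-rotation about the axis through them, so that on the closed hemisphere $\cR\Sigma_2^3$ the residual action is the reflection across the great semicircle joining these two points through the equilateral pole. This is correct, and it is actually more careful than the paper's own argument: the paper's displayed image formula fails a direct check (the shape $(1,1,-2)/\sqrt6$ has Hopf image $(0,0,-1)$ and its relabeling $(-2,1,1)/\sqrt6$ has Hopf image $(\sqrt3/2,0,1/2)$, not $(\sqrt3/2,0,-1/2)$), and in any case the $(x_1,x_3)$-block $-\tfrac12\smat{1&\sqrt3\\ \sqrt3&-1}$ is orthogonal with determinant $-1$, i.e.\ a reflection, so the paper's reading of it as ``a rotation by $\pi/3$'' --- on which the ``third'' and the value $\pi/6$ rest --- is not justified by its own display.

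The genuine gap is your fourth step, which you leave as a promise and which cannot be carried out: cutting the closed hemisphere along the fixed semicircle of an order-two isometry yields a closed \emph{half} of the hemisphere, never a third, and no bookkeeping of the $SO(2)$-, conjugation- and $L_3$-identifications changes this, since $G=\{I_3,L_3\}$ has order two and your own step shows the induced map on $\cR\Sigma_2^3$ is a single reflection. The claimed diameter also fails concretely at the very points you exhibit: with $z=(1,0,-1)/\sqrt2$, $\tilde z:=zL_3=-z$ and $w=(1,-2,1)/\sqrt6$ one has $zw^T=z\bar w^T=\tilde zw^T=\tilde z\bar w^T=0$, hence $d_{\cR\cR\Sigma_2^3}([z],[w])=\pi/2$; since $\cR\cR\Sigma_2^3$ is a metric quotient of $\cR\Sigma_2^3$, whose maximal distance is $\pi/2$, its diameter is exactly $\pi/2$, not $\pi/6$ (nor $\pi/4$, which is what a genuine one-third lune would give, as the pole stays at distance $\pi/4$ from the equator). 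So the obstruction you sensed in your closing paragraph is real, but it is not a bookkeeping subtlety to be overcome: your correct steps one to three show that the statement, and the paper's proof of it, need repair --- the natural conclusion of your approach is that $\cR\cR\Sigma_2^3$ is a closed half of the closed hemisphere (a lune with vertices at the two $L_3$-symmetric collinear shapes) of diameter $\pi/2$, and you should flag this rather than try to force the ``third''/$\pi/6$ identification.
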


    \begin{proof}
     For $z=(z_1,z_2,z_3) \in \cS_2 ^3$ let $w=(w_1,w_2) = zH_3$. Then, using the Helmert submatrix,
     $$ (w_1,w_2) H_3^TL_3H_3 = (w_1,w_2)\frac{1}{2}\left(\begin{array}{cc} 1 &-\sqrt{3}\\-\sqrt{3} & -1\end{array}\right) = - \frac{1}{2}\left(w_2\,\sqrt{3} - w_1,w_2 +w_1\,\sqrt{3}\right)\,,$$
     and
     \begin{eqnarray*}\left(w_2\,\sqrt{3} - w_1\right)\,\left(\overline{w_2 +w_1\,\sqrt{3}}\right) &=& -\,\sqrt{3}(|w_1|^2 -|w_2|^2) -w_1\bar w_2 + 3w_2\bar w_1\\
     &=& -\,\sqrt{3}(|w_1|^2 -|w_2|^2) -2\RE(w_1\bar w_2) - i 4\IM(w_1\bar w_2)\,,\\
     \left|w_2\,\sqrt{3} - w_1\right|^2-\left|\overline{w_2 +w_1\,\sqrt{3}}\right|^2 &=& 2(|w_1|^2 - |w_2|^2) - 4\sqrt{3}\RE(w_1\bar w_2)\,.
      \end{eqnarray*}
     Hence, under reverse labeling,
     $$\left(\begin{array}{c}x_1\\x_2\\x_3\end{array}\right) =  \left(\begin{array}{c}
                         2\RE(w_1 \bar w_2)\\
                         2\IM(w_1 \bar w_2)\\
                         |w_1|^2-|w_2|^2
                        \end{array}\right) $$ 
     is mapped to 
     $$\left(\begin{array}{c}
                         \frac{1}{2}\RE\left(\left(w_2\,\sqrt{3} - w_1\right)\,\left(\overline{w_2 +w_1\,\sqrt{3}}\right)\right)\\
                         \frac{1}{2}\IM\left(\left(w_2\,\sqrt{3} - w_1\right)\,\left(\overline{w_2 +w_1\,\sqrt{3}}\right)\right)\\
                         \frac{1}{4}\left(\left|w_2\,\sqrt{3} - w_1\right|^2-\left|\overline{w_2 +w_1\,\sqrt{3}}\right|^2\right)
                        \end{array}\right) 
                        =-\, \frac{1}{2}\left(\begin{array}{c}x_1 + \sqrt{3}\,x_3 \\ 2 x_2 \\\sqrt{3}\,x_1 - x_3\end{array}\right)\,,   
                        $$
    i.e. accounting for a rotation by $\pi/3$ in the $(x_1,x_3)$-plane. Since reflection is filtered out by assuming $x_2\geq 0$ and taking into account that in order to achieve an isometry the radius of the unit sphere needs to be halved, the maximal distance in $\cR\cR\Sigma_2^3$ is indeed $\frac{1}{2}\,\frac{\pi}{3}$.
    \end{proof}
    
    \begin{Rm} We anticipate that the diameter of $\cR\cR\Sigma_2^k$ increases with $k$. \end{Rm}

\section{Simulations and Application}\label{scn:simulation-application}

\paragraph{Simulations on $\Sigma_2^5$.} Here we consider two different planar configurations of five landmarks, of same shape for $H_0$ and of shape distance $0.06$ for an alternative $H_1$, each with random independent identical multinormal isotropic noise on every landmark with standard deviation of $0.2$. In Table \ref{tab:perfomance_shape_space} we compare the various two-samples tests at nominal level $\alpha = 0.05$ for various sample sizes with one another and record the empirical level under $H_0$ and the power under $H_1$; we also include the bootstrap test  $T_J$ proposed by \cite{preston2010two}, originally for three-dimensional shape data. In terms of keeping the level and maximizing power, from inspection of Table \ref{tab:perfomance_shape_space}, the tests based on individual lifting tend to outperform the others tests. 
 
\begin{table}[!h]
\begin{center}
	\includegraphics[scale = 0.4]{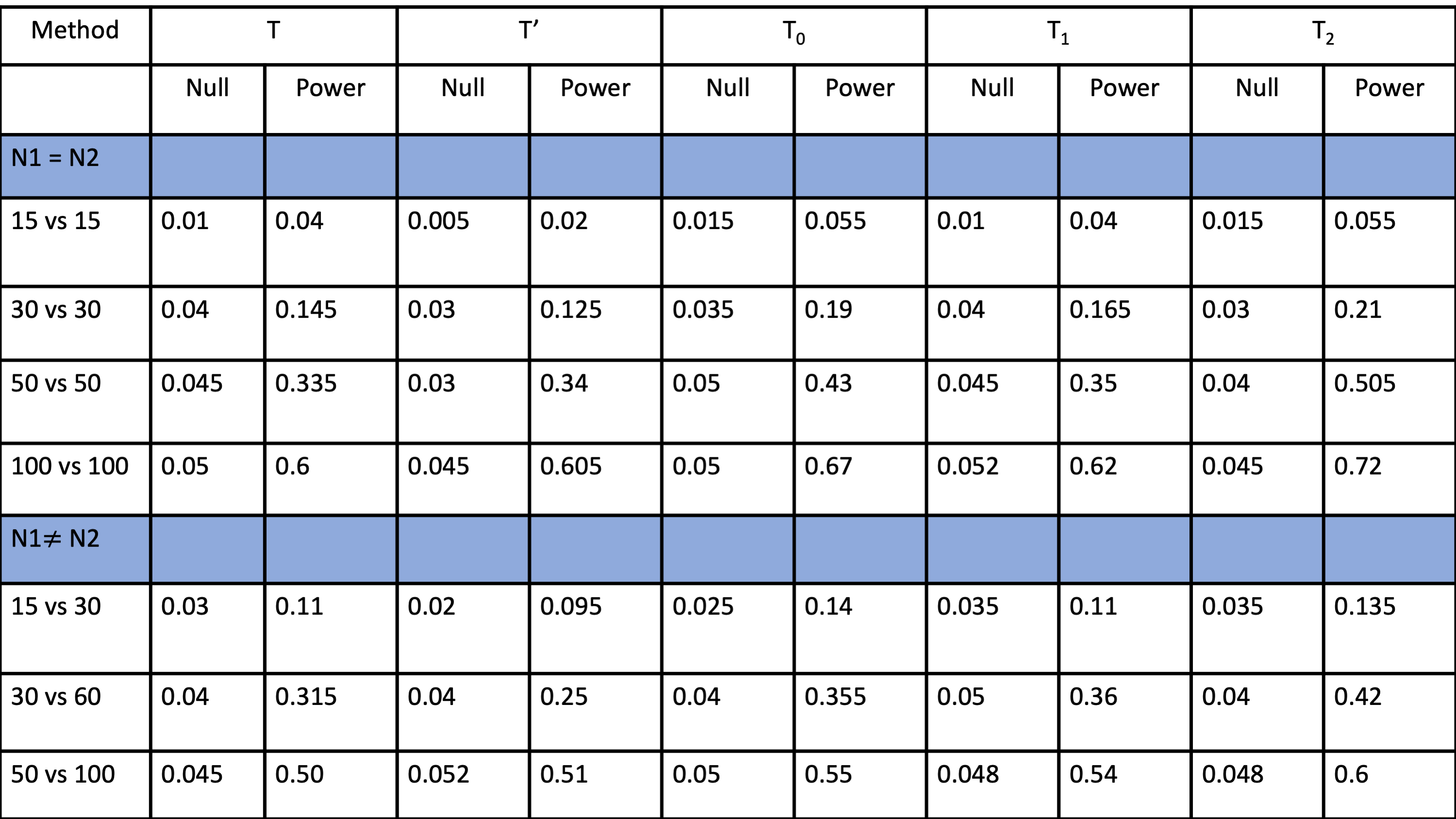}
\end{center}
\caption{\it Comparing true levels (columns labeled Null) at nominal level $0.05$ and powers (columns labeled Power) for the bootstrap versions of the various tests from Section \ref{scn:tests}: $T$ stands for ``pooled lifting'', $T'$ for $T_J$ from \cite{preston2010two},  $T_0$ for ``pooled lifting intrinsically'', $T_1$ for ``individual lifting'' and $T_2$ for ``individual asymmetric lifting''.  The numbers are averages of rejections of $H_0$ of $1000$ tests each. The first column gives the two samples sizes, which are equal for rows 4 -- 7 (alleviating asymptotic normality) and different for rows 8 -- 10.}
\label{tab:perfomance_shape_space}
\end{table}

In a second simulation, we consider the same configuration data, now in reverse labeling reflection shape space $\cR\cR\Sigma_2^5$. We perform the same tests as above and now record the results in Table \ref{tab:perfomance_RR_shape_space}. While in terms of keeping the level and maximizing power, the tests based on individual lifting still tend to outperform the others tests, this effect is weaker than for shape spaces above. Figure \ref{fig:power_curve} gives more detail by depicting  the corresponding power curves. Here, the picture is clearer, the bootstrap tests ``pooled lifting'' and $T_J$ have least power, the tests ``pooled lifting individually'' and ``individual lifting'' have higher power, the test ``individual asymmetric lifting'' has largest power. Notably their true levels are rather comparable (see Table \ref{tab:perfomance_RR_shape_space}).

\begin{table}[!h]
\begin{center}
	\includegraphics[scale = 0.4]{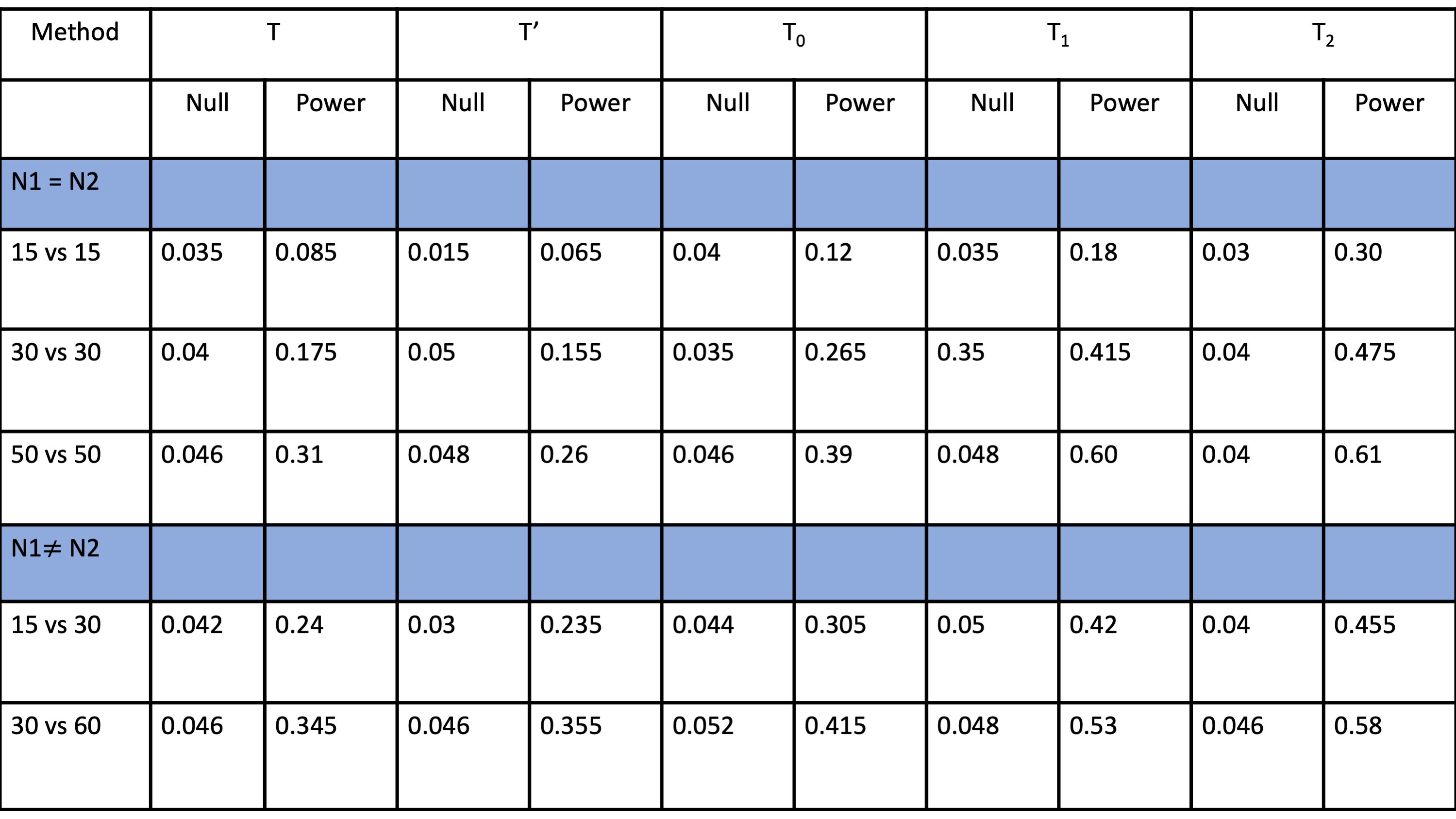}
	\caption{\it Depicting true levels and powers as in Table \ref{tab:perfomance_shape_space} for the same tests of the same data, but now for reverse labeling reflection shape in $\cR\cR\Sigma_2^5$. Again the  numbers are averages of $1000$ tests each.}
\label{tab:perfomance_RR_shape_space}
\end{center}
\end{table}

\begin{figure}[!h]
\begin{center}
	\includegraphics[scale=0.35]{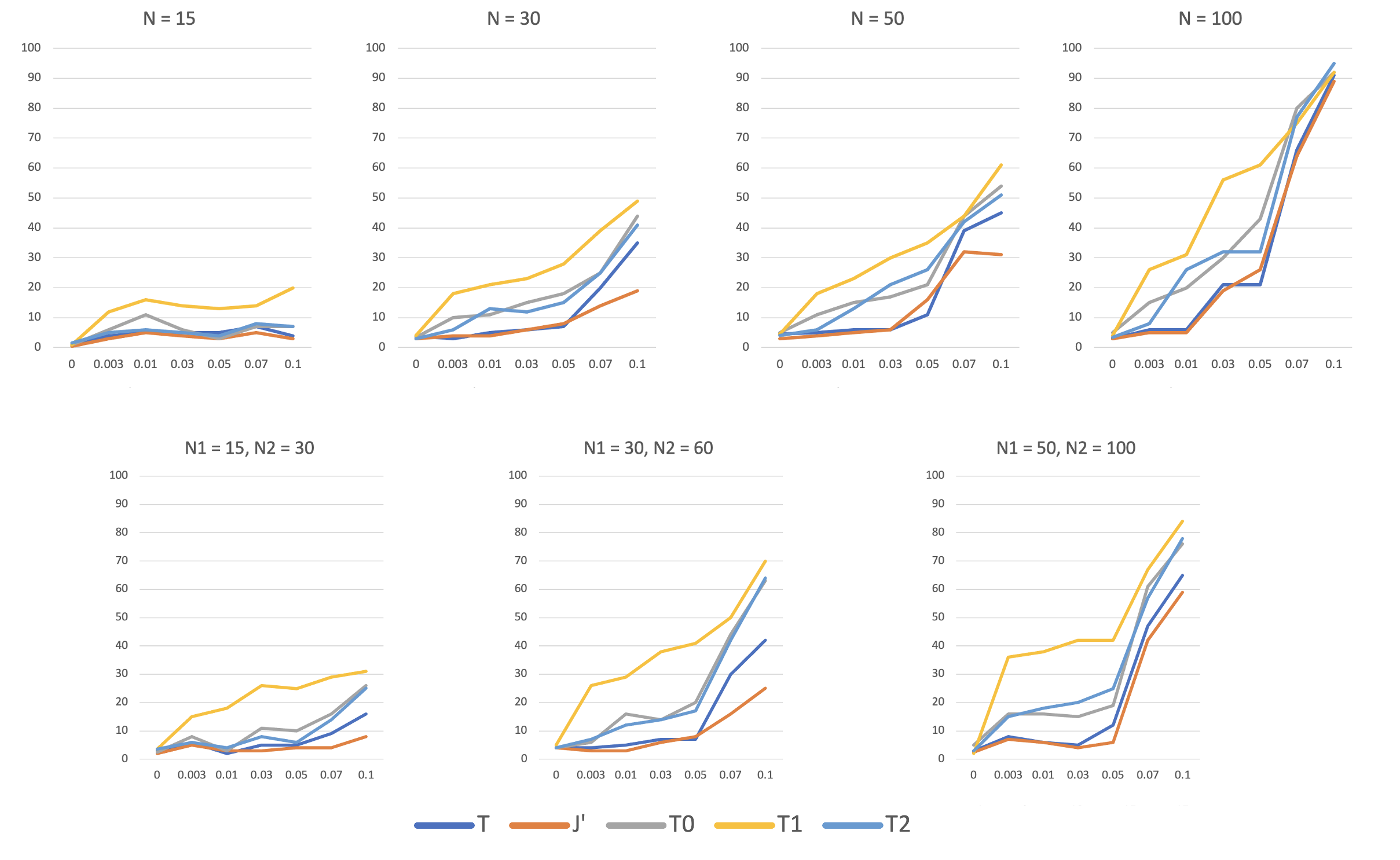}
\end{center}
\caption{\it Power curves for the data and five tests (labeling explained in Table \ref{tab:perfomance_shape_space}. The values at $0.06$ correspond to the ones recorded under power in Table \ref{tab:perfomance_RR_shape_space}. Top row for equal sample sizes (rows 4 -- 7 in Table \ref{tab:perfomance_RR_shape_space}) and bottom row for different sample sizes (rows 8 -- 10 in Table \ref{tab:perfomance_RR_shape_space}). In contrast to Table \ref{tab:perfomance_RR_shape_space} each test has been repeated $500$ times. The horizontal axis records the reverse relabeling reflection shape distance between the corresponding two population Fr\'echet means and the vertical axis depicts the percentage of rejections.}
\label{fig:power_curve}
\end{figure}

\paragraph{Application to the interaction of microtubules and intermediate filaments in biological cells.} 

In the analysis of biological cells filament structures, buckles of \emph{microtubules} play an important role, for example \citet{nolting2014mechanics}. In particular, intermediate filaments (IF) are important components which provide mechanical stability in cells and tissues. A disruption of the IF or of their connection with other cell structures cause different degenerative diseases of skin, muscle, and neurons, see, for example \citet{LZ01}. Most notably, even though tumor cells lose their normal appearance, they retain several expressions of particular IF proteins. Thus, a method to identify the IF proteins in tumor cells could help to develop the most effective treatment to destroy the tumor \citep{LZ01}. 

 \begin{figure}[h!]
\centering
\includegraphics[width=0.5\textwidth]{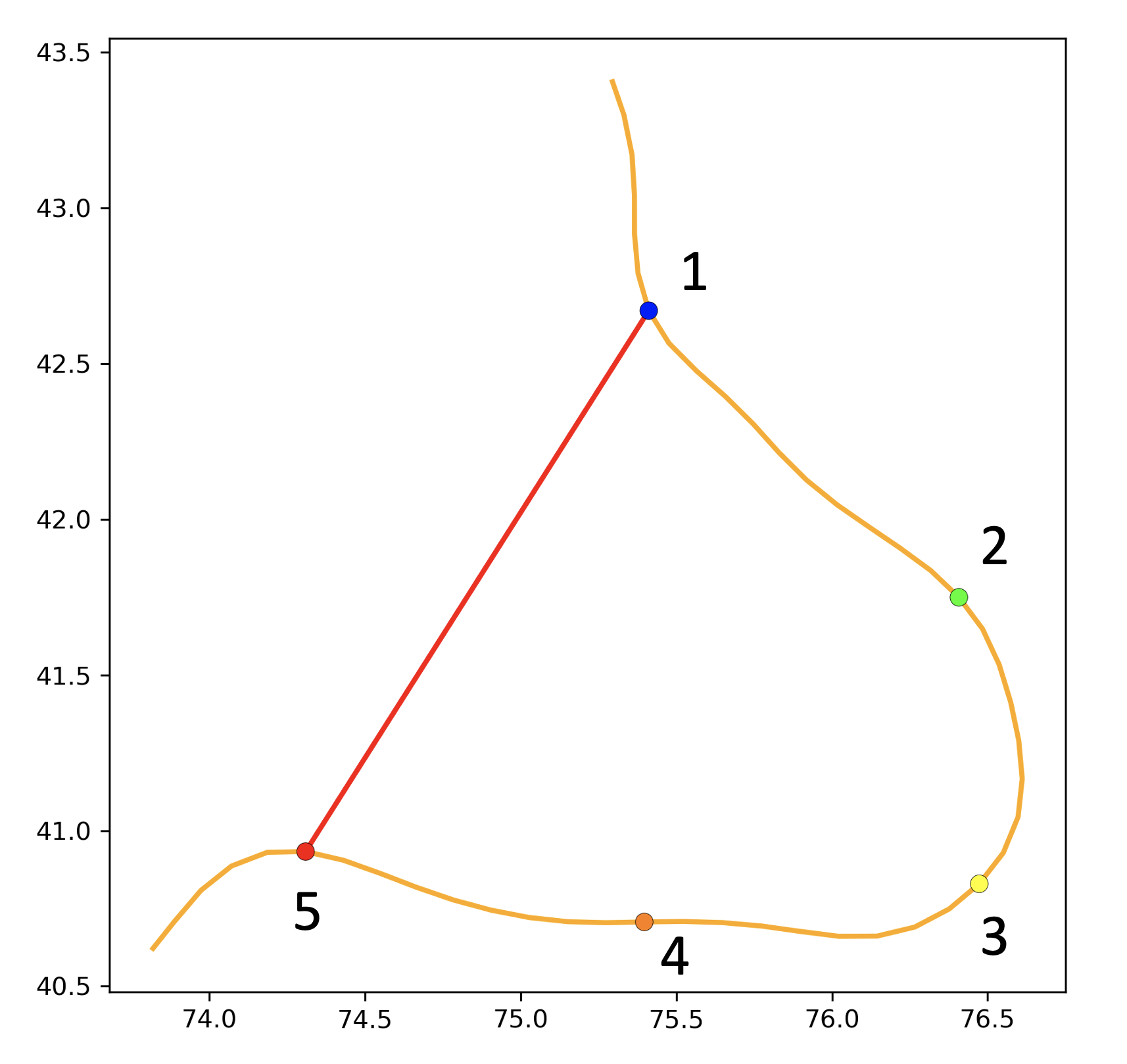}
\caption{\it Determining $5$ landmarks on a typical microtubules filament at mathematically characteristic locations for buckle structure. Landmarks 1 and 5 maximize the ratio between Euclidean distance and curve length between them times the fourth root of the maximal normal distance along the buckle, which gives the 3rd landmark. Landmark 2 is furthest from this normal line (on the side of Landmark 1). Landmark 4 maximizes normal distance from the line connecting Landmarks 5 and 3. Afterwards, all landmarks (2,3,4) have been shifted minimally to make them more equidistant .
}
\label{fig:buckles}
\end{figure}

With the theory and methods presented in this paper, we were able to shed light on developing a new method to distinguish cells with and without IFs. 

 Using the \emph{filament sensor} from \citet{EltznerWollnikGottschlichHuckemannRehfeldt2015} (see also \citet{hauke2023filamentsensor}), from cells provided by the Cellular Biophysics Research Group of the Institute for X-ray Physics at the 
University of G\"ottingen, we have extracted planar microtubules buckle structures 
and placed 5 mathematically defined landmarks on them (Figure~\ref{fig:buckles}), leading to two groups. The first group contains $65$ buckles from cells with intermediate filaments (generating stiffness) and the second group contains $29$ buckles from cells without intermediate filaments. Because the buckles' shape are considered to be the same under reflection and traversing in reverse orientation, the two groups of buckles are modeled in the reverse labeling reflection shape space $\cR\cR\Sigma_2^5$.  Figure \ref{fig:shapes} shows typical buckle structures. 
 
\begin{figure}[h!]
\centering
\includegraphics[width=0.5\textwidth]{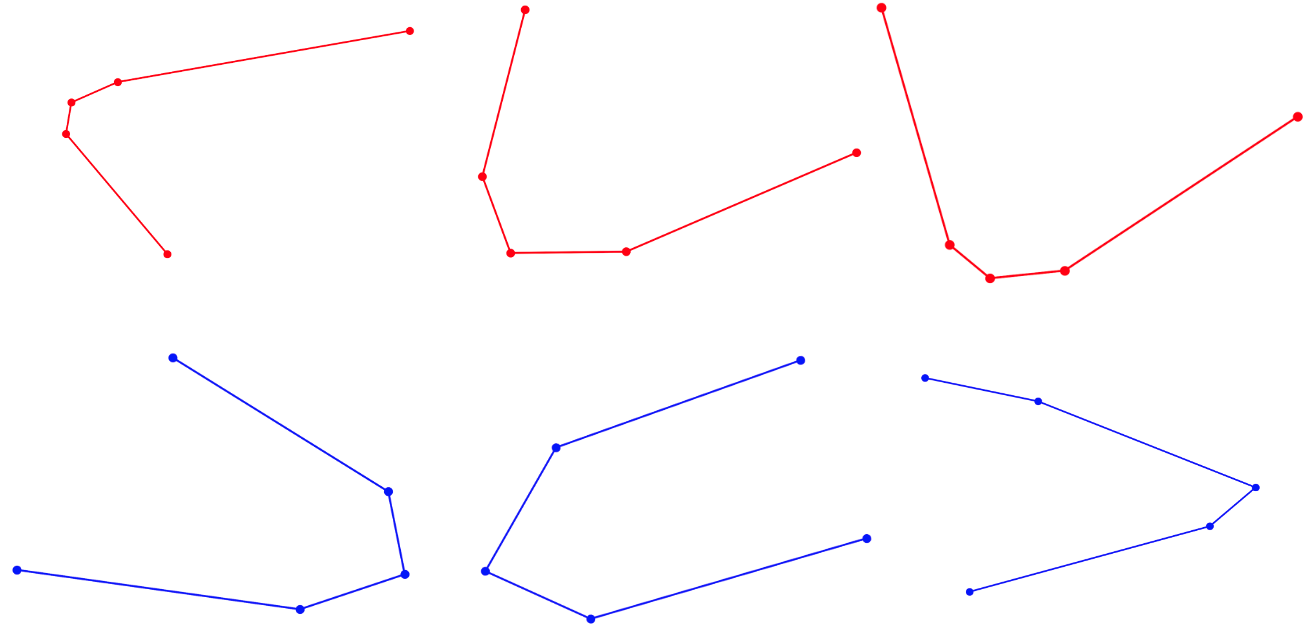}
\caption{\it Typical microtubules buckle structures discretized by 5 landmarks. Upper row: without intermediate (vimentin) filaments. Lower row: in the presence of intermediate (vimentin) filaments (generating stiffness).
}
\label{fig:shapes}
\end{figure}

 Applying the two-sample test procedures from Section \ref{scn:tests} to the data we are able to distinguish the two groups at level $\alpha = 0.05$. In fact, from the $p$-values in Table \ref{tab:buckles-test} we can see that only the test ``individual lifting'' ($T_1$) and ``individual asymmetric lifting'' ($T_2$) detect the difference, while the other two tests ``pooled lifting'' ($T$) and ``pooled lifting intrinsically'' ($T_0$) do not. Notably, after Bonferroni correction for four tests, only the test ``individual asymmetric lifting'' ($T_1$) detects the difference. For convenience, Table \ref{tab:buckles-test} also records the $p$-value of test $T_J$ from \cite{preston2010two}.

 \begin{table}[h!]		
\begin{center}
\begin{tabular}{ |c|c|c|c|c||c|} 
 \hline
 &T & $T_0$ & $T_1$ &$T_2$&$T_J$ \\ 
  \hline
 \hline
 $p$-value &0.08 & 0.052 & 0.011& 0.027&0.081 \\ 
 \hline
\end{tabular}

\end{center}
\caption{\it Results from different two-sample tests for the two groups (with and without IFs) of reverse relabeling reflection buckles. Test abbreviations as in Table \ref{tab:perfomance_shape_space}.
}
\label{tab:buckles-test}
\end{table}


\section*{Acknowledgments}
All four authors are very grateful to Alexander Lytchak for very helpful comments in differential geometry. Also, all authors thank Sarah K\"oster for providing the microtubules data.  DTV and SP gratefully acknowledge funding by DFG HU 1575/7. SP acknowledges support by the Research Foundation – Flanders (FWO) via the Odysseus II programme no. G0DBZ23N. BE acknowledges funding by DFG SFB 1456.

\bibliographystyle{Chicago}
\bibliography{shape,
zytos,
diffgeo,
comp_vis,
stats
}
\end{document}